\documentclass[a4paper,10pt]{article}

\usepackage{amscd,amssymb,amsmath,graphicx,verbatim,hyperref}
\usepackage[OT1,T1]{fontenc}

\usepackage[all]{xypic}
\usepackage{pdfpages}

\newcommand{\qed}{\nobreak \ifvmode \relax \else
      \ifdim\lastskip<1.5em \hskip-\lastskip
      \hskip1.5em plus0em minus0.5em \fi \nobreak
      \vrule height0.75em width0.5em depth0.25em\fi}

\newcounter{Definitioncount}
\setcounter{Definitioncount}{0}

\newtheorem{theorem}{Theorem}

\newtheorem{proposition}[theorem]{Proposition}
\newtheorem{corollary}[theorem]{Corollary}

\newenvironment{proof}[1][Proof]{\begin{trivlist}
\item[\hskip \labelsep {\bfseries #1}]}{\end{trivlist}}
\newenvironment{definition}[1][Definition]{\begin{trivlist}
\item[\hskip \labelsep {\bfseries #1}] \refstepcounter{Definitioncount} \textbf{\arabic{Definitioncount}} }{\end{trivlist}}


\newcommand{\bc}{{\cal{C}}}
\newcommand{\ba}{{\cal{A}}}
\newcommand{\bk}{{\cal{K}}}

\newcommand{\bx}{{\cal{X}}}

\newcommand{\ev}{\mathcal{V} }

\numberwithin{equation}{section}

\begin{document}

\author{Ross Street\footnote{The author gratefully acknowledges the support of an Australian Research Council Discovery Grant DP1094883.}}
\title{The core of adjoint functors}
\date{}
\maketitle

{\small{\emph{2000 Mathematics Subject Classification.} \quad 18A40; 18D10; 18D05}}
\\
{\small{\emph{Key words and phrases.} adjoint functor; enriched category; Kleisli cocompletion; bicategory.}}
\begin{center}
--------------------------------------------------------
\end{center}

\begin{abstract}
\noindent There is a lot of redundancy in the usual definition of adjoint functors. We define and prove the core of what is required. First we do this in the hom-enriched context. Then we do it in the cocompletion of a bicategory with respect to Kleisli objects, which we then apply to internal categories. Finally, we describe a doctrinal setting. 
\end{abstract}
\tableofcontents

\section{Introduction}

Kan [\ref{Kan1958}] introduced the notion of adjoint functors. By defining the unit and counit natural transformations, he paved the way for the notion to be internalized to any 2-category. This was done by Kelly [\ref{Kelly1969}] whose interest at the time was particularly in the 2-category of $\ev$-categories for a monoidal category $\ev$ (in the sense of Eilenberg-Kelly [\ref{EilKel1966}]). 

During my Topology lectures at Macquarie University in the 1970s, the students and I realized, in proving that a function $f$ between posets was order preserving when there was a function $u$ in the reverse direction such that $f(x) \le a$ if and only if $x \le u(a)$, did not require $u$ to be order preserving. I realized then that knowing functors in the two directions only on objects and the usual hom adjointness isomorphism implied the effect of the functors on homs was uniquely determined. Writing this down properly led to the present paper. 

Section \ref{adj} merely reviews adjunctions between enriched categories. Section \ref{Crs}  introduces the notion of core of an enriched adjunction: it only involves the object assignments of the two functors and a hom isomorphism with no naturality requirement. The main result characterizes when such a core is an adjunction. 

The material becomes increasingly for mature audiences; that is, for those with knowledge of bicategories. Sections \ref{abm} and \ref{cbm} present results about adjunctions in the Kleisli object cocompletion of a bicategory in the sense of [\ref{FTMII}]. In particular, this is applied in Section \ref{cbic} to adjunctions for categories internal to a finitely complete category. By a different choice of bicategory, where enriched categories can be seen as monads (see [\ref{BCSW}]), we could rediscover the work of Section \ref{Crs}; however, we leave this to the interested reader.  In Section \ref{doctrinal} we describe a general setting, involving a pseudomonad (doctrine) on a bicategory, using a construction of Mark Weber [\ref{WeberACS}].

\section{Adjunctions}\label{adj}

For $\ev$-categories $\ba$ and $\bx$, an
{\it adjunction} consists of
\begin{enumerate}
\item  $\ev$-functors $U:\ba\longrightarrow \bx$
and $F:\bx\longrightarrow \mathcal{A}$;
\item a $\ev$-natural family of isomorphisms $\pi :\mathcal{A}(
F X,A) \cong \bx( X,U A) $ in $\ev$ indexed by $A\in
\mathcal{A}$, $X\in \bx$.
\end{enumerate}
We write $\pi :F\dashv U:\mathcal{A}\longrightarrow \bx$.

The following result is well known; for example see Section 1.11
of \cite{KellyBook}.
\begin{proposition}
Suppose $U:\mathcal{A}\longrightarrow \bx$ is a $\ev$-functor,
$F:\mathit{ob}\bx\longrightarrow \mathit{ob}\mathcal{A}$
is a function, and, for each $X\in \bx$, $\pi :\mathcal{A}(
F X,A) \cong \bx( X,U A) $ is a family of isomorphisms $\ev$-natural
in $A\in \mathcal{A}$. Then there exists a unique adjunction $\pi
:F\dashv U:\mathcal{A}\longrightarrow \bx$ for which $F:\mathrm{ob\bx}\longrightarrow
\mathrm{ob\mathcal{A}}$ is the effect of the $\ev$-functor
$F:\bx\longrightarrow \mathcal{A}$ on objects. \qed \ \ \ \ \ \ \label{XRef-Proposition-121173012}
\end{proposition}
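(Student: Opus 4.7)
The plan is to extract the enriched functoriality of $F$ directly from the given hom-isomorphism $\pi$, following the usual Yoneda-style reasoning in a $\ev$-enriched setting, and to verify that this extension is both forced (giving uniqueness) and works (giving existence).

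First I would define a unit at $X$ by $\eta_X := \pi_{FX}(1_{FX})$, a morphism from $X$ to $UFX$ in $\bx$. Then, for each pair $X, Y \in \bx$, define the hom-action $F_{X,Y}:\bx(X,Y)\to \ba(FX,FY)$ as the unique arrow in $\ev$ characterised by $\pi_{FY}\circ F_{X,Y} = \bx(X,\eta_Y)$, where $\bx(X,\eta_Y)$ denotes post-composition by $\eta_Y$ in $\bx$. This definition is in fact forced: if any candidate $\ev$-functor structure $F'$ on $F$ renders $\pi$ $\ev$-natural in $X$ as well, then the $\ev$-naturality square of $\pi$ in the variable $X$, evaluated at $1_{FY}$, collapses to exactly the displayed equation, so $F'_{X,Y}=F_{X,Y}$; that gives uniqueness.

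Next I would verify the $\ev$-functor axioms. Unitality $F_{X,X}\circ j_X = 1_{FX}$ follows by post-composing with $\pi_{FX}$ and using $\pi_{FX}(1_{FX}) = \eta_X$. Compositionality, namely that the two routes around the square $\bx(Y,Z)\otimes \bx(X,Y)\to \ba(FX,FZ)$ agree, follows by applying the iso $\pi_{FZ}$ to both composites and invoking the assumed $\ev$-naturality of $\pi$ in the object variable $A$ at the morphism $F_{Y,Z}:\bx(Y,Z)\to \ba(FY,FZ)$; this reduces both sides to the same composite of post-composition with $\eta_Z$ and the composition of $\bx$, which agree by associativity in $\bx$ combined with the $\ev$-functoriality of $U$ already in hand. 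Finally, $\ev$-naturality of $\pi$ in $X$ is built into the defining equation $\pi_{FY}\,F_{X,Y} = \bx(X,\eta_Y)$: rewriting the desired naturality square in terms of $U$ on hom-objects, the two composites agree by this equation and the $\ev$-functoriality of $U$.

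The main obstacle will be checking composition preservation in honestly enriched form; the tactic, as in Kelly's book, is to avoid manipulating composites in $\ba$ directly and instead prove equality after applying the iso $\pi_{FZ}$, at which point $\ev$-naturality of $\pi$ in $A$ supplies the single non-trivial identity needed. Everything else is either a definition chase or, for uniqueness, the remark that any extension of $F$ to a $\ev$-functor making $\pi$ $\ev$-natural in $X$ must satisfy the same characterising equation.
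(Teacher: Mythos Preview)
The paper does not actually prove this proposition: it is stated as well known, with a reference to Section~1.11 of Kelly's book, and the \qed\ marks the end of the statement. Your proposal is precisely the standard argument one finds there---define the unit $\eta_X$ by transporting the identity across $\pi$, force the hom-action $F_{X,Y}$ via the characterising equation $\pi\circ F_{X,Y}=\bx(1,\eta_Y)$, and verify functoriality and $\ev$-naturality in $X$ by reducing to $\ev$-naturality of $\pi$ in $A$ together with the identity $\pi=\bx(\eta_X,1)\circ U_{FX,A}$ (itself a consequence of that same naturality). One phrasing to tighten: when you say you invoke naturality in $A$ ``at the morphism $F_{Y,Z}$'', note that $F_{Y,Z}$ lives in $\ev$, not in $\ba$; what you are really doing is instantiating the naturality square at $A=FY$, $B=FZ$ and precomposing appropriately. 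With that clarification the argument is sound and coincides with the reference the paper cites.
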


\section{Cores}\label{Crs}

\begin{definition}
For $\ev$-categories $\mathcal{A}$ and $\bx$, an
{\it adjunction core} consists of
\begin{enumerate}
\item functions $U:\mathit{ob}\mathcal{A}\longrightarrow \mathit{ob}\bx$
and $F:\mathit{ob}\bx\longrightarrow \mathit{ob}\mathcal{A}$;\ \ 
\item a family of isomorphisms $\pi :\mathcal{A}( F X,A) \cong \bx(
X,U A) $ in $\ev$ indexed by $A\in \mathcal{A}$, $X\in \bx$.
\end{enumerate}
\end{definition}

Given such a core, we make the following definitions:

\vspace*{5mm}

(a)  $\beta _{X}:X\longrightarrow U F X$ is the composite
\[I\overset{j}{\longrightarrow }\mathcal{A}( F X,F X) \overset{\pi}{\longrightarrow }\bx( X,U F X) ;\]

(b)  $\alpha _{A}:F U A\longrightarrow A$ is the composite\[I\overset{j}{\longrightarrow }\bx( U A,U A) \overset{\pi^{-1}}{\longrightarrow }\mathcal{A}( F U A,X) ;\]

(c)  $U_{A B}:\mathcal{A}( A,B) \longrightarrow \bx( UA,U B) $ is the composite\[\mathcal{A}( A,B) \overset{\mathcal{A}( \alpha _{A},1) \ \ }{\longrightarrow}\mathcal{A}( F U A,B) \overset{\pi }{\longrightarrow }\bx(U A,U B) ;\]

(d)  $F_{X Y}:\bx( X,Y) \longrightarrow \mathcal{A}( FX,F Y) $ is the composite \[\bx( X,Y) \overset{\bx( 1,\beta _{Y}) \ \ }{\longrightarrow}\bx( X,U F Y) \overset{\pi ^{-1}}{\longrightarrow }\mathcal{A}(F X,F Y) .\]

Clearly each adjunction $\pi :F\dashv U:\mathcal{A}\longrightarrow\bx$ includes an adjunction core as part of its data. Then it follows directly from the Yoneda lemma and the definitions (a) and (b) that the effect of $U$ and $F$ on homs are as in (c) and (d).

\begin{theorem} \label{enrichedCoreThm}
An adjunction core extends to an adjunction if and only if one of
the diagrams (\ref{XRef-Equation-121162956}) or (\ref{XRef-Equation-121163025})
below commutes. The adjunction is unique when it exists.

\begin{equation}\label{XRef-Equation-121162956}
\xymatrix{\ba (A,B) \otimes \ba (FX,A) \ar[rr]^-{U_{A,B}\otimes\pi} \ar[d]_-{\mathrm{comp}} && \bx (UA,UB) \otimes \bx (X, UA) \ar[d]^-{\mathrm{comp}} \\\ba (FX,B) \ar[rr]_-{\pi} && \bx (X,UB)}
\end{equation}

\begin{equation}\label{XRef-Equation-121163025}
\xymatrix{
\bx (Y,UA) \otimes \bx (X,Y) \ar[rr]^-{\pi^{-1}\otimes F_{X,Y}} \ar[d]_-{\mathrm{comp}} && \ba (FY,A) \otimes \ba (FX, FY) \ar[d]^-{\mathrm{comp}} \\
\bx (X,UA) \ar[rr]_-{\pi^{-1}} && \ba (FX,A)}
\end{equation}
\end{theorem}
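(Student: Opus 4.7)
The plan is to reduce the hard direction to Proposition \ref{XRef-Proposition-121173012}. First I would dispatch the easy direction: if the core already extends to an adjunction, then $U_{A,B}$ and $F_{X,Y}$ from (c) and (d) coincide with the hom-effects of the $\ev$-functors $U$ and $F$, as the excerpt observes via Yoneda. Diagram (\ref{XRef-Equation-121162956}) is then literally the $\ev$-naturality square for $\pi$ in the variable $A$, and (\ref{XRef-Equation-121163025}) is $\ev$-naturality of $\pi^{-1}$ in $X$, so both diagrams commute.

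For the hard direction I assume (\ref{XRef-Equation-121162956}) commutes and promote $U$ to a genuine $\ev$-functor via the formula (c); with this done, Proposition \ref{XRef-Proposition-121173012} yields a unique adjunction. Functoriality of $U$ has two ingredients. The unit law $U_{A,A}\circ j_A = j_{UA}$ drops out of definitions (c) and (b) alone, since $\pi\circ\ba(\alpha_A,1)\circ j_A = \pi(\alpha_A) = j_{UA}$ by the very definition of $\alpha_A$, without invoking the hypothesis. For compatibility with composition, the idea is to specialise diagram (\ref{XRef-Equation-121162956}) at $X=UA$ and then precompose both sides with the map $1\otimes\ba(\alpha_A,1):\ba(B,C)\otimes\ba(A,B)\to\ba(B,C)\otimes\ba(FUA,B)$; the left path reorganises, using associativity of composition in $\ba$, into $U_{A,C}\circ\mathrm{comp}_{\ba}$, while the right path collapses through the definition of $U_{A,B}$ into $\mathrm{comp}_{\bx}\circ(U_{B,C}\otimes U_{A,B})$, which is precisely the desired equality.

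The second diagram case is symmetric: assuming (\ref{XRef-Equation-121163025}), a dual argument (passing to opposite $\ev$-categories and applying the mirror form of Proposition \ref{XRef-Proposition-121173012}) turns $F_{X,Y}$ of (d) into a $\ev$-functor and again produces the adjunction. Uniqueness is automatic: the object assignments together with $\pi$ force $\alpha$ and $\beta$ via (b) and (a), and then (c) and (d) force the hom-effects, so any $\ev$-functor structure extending the core must be this one. I expect the only real obstacle to be the composition law for $U$; the calculation is short, but one has to notice the right specialisation, namely $X=UA$, precisely so that $\alpha_A$ can be inserted to bridge $\ba(A,B)$ into the domain $\ba(FUA,B)$ where (\ref{XRef-Equation-121162956}) takes hold.
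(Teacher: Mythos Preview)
Your proposal is correct and follows essentially the same route as the paper's own proof: the paper likewise disposes of the easy direction by reading (\ref{XRef-Equation-121162956}) as $\ev$-naturality of $\pi$ in $A$, then for the converse pastes the naturality-of-composition square with $1\otimes\ba(\alpha,1)$ onto (\ref{XRef-Equation-121162956}) (at $X=UC$, which is your $X=UA$ after relabeling) to obtain that $U$ preserves composition, checks the identity axiom directly from (b) and (c), and invokes Proposition \ref{XRef-Proposition-121173012}. The paper handles (\ref{XRef-Equation-121163025}) by passing to $\ev^{\mathrm{rev}}$ and the adjunction $\pi^{-1}:U\op\dashv F\op:\bx\op\to\ba\op$, which is exactly the ``opposite $\ev$-categories'' duality you sketch.
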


\begin{proof} 
We deal first with the version involving diagram
(\ref{XRef-Equation-121162956}). For an adjunction, (\ref{XRef-Equation-121162956})
expresses the $\ev$-naturality of $\pi $ in $A\in \mathcal{A}$.
Conversely, given an adjunction core satisfying (\ref{XRef-Equation-121162956}),
we paste to the left of (\ref{XRef-Equation-121162956} with $X =UC$, the diagram 
\begin{equation}
\xymatrix{
\ba (A,B) \otimes \ba (C,A) \ar[rr]^-{1 \otimes \ba (\alpha _C ,1) } \ar[d]_-{\mathrm{comp}} && \ba (A,B) \otimes \ba (FUC,A) \ar[d]^-{\mathrm{comp}} \\
\bx (X,UA)  \ar[rr]_-{\pi^{-1} } && \ba (FX,A) }
\end{equation}
which commutes by naturality of composition. This leads to the following commutative square.
\begin{equation}\label{XRef-Equation-121172723}
\xymatrix{
\ba (A,B) \otimes \ba (C,A)  \ar[rr]^-{U \otimes U} \ar[d]_-{\mathrm{comp}} && \bx (UA,UB) \otimes \bx (UC,UA)  \ar[d]^-{\mathrm{comp}} \\
\ba (C,B)  \ar[rr]_-{U} && \bx (UC,UB) }
\end{equation}
We also have the equality
\begin{equation}
\left( I\overset{j}{\longrightarrow }\mathcal{A}( A,A) \overset{U}{\longrightarrow
}\bx( U A,U B) \right) =\left( I\overset{j}{\longrightarrow
}\bx( U A,U B) \right) %
\label{XRef-Equation-121172748}
\end{equation}
straight from the definitions (b) and (c). Together (\ref{XRef-Equation-121172723})
and (\ref{XRef-Equation-121172748}) tell us that $U$ is a $\ev$-functor.
Now the general diagram (\ref{XRef-Equation-121162956}) expresses
the $\ev$-naturality of $\pi $ in $A$. By Proposition \ref{XRef-Proposition-121173012},
we have an adjunction determined uniquely by the core.

Writing $\ev^{\mathrm{rev}}$ for $\ev$ with the
reversed monoidal structure $A\otimes ^{\mathrm{rev}}B=B\otimes
A$, and applying the first part of this proof to the $\ev^{\mathrm{rev}}$-enriched
adjunction $\pi ^{-1}:U^{\mathrm{op}}\dashv F^{\mathrm{op}}:\bx^{\mathrm{op}}\longrightarrow
\mathcal{A}^{\mathrm{op}}$, which is the same as an adjunction $\pi
:F\dashv U:\mathcal{A}\longrightarrow \bx$, we see that
it is equivalent to an adjunction core satisfying (\ref{XRef-Equation-121163025}). \qed
\end{proof}

\begin{corollary}
If $\ev$ is a poset then adjunction cores are adjunctions.
\end{corollary}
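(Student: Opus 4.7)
The plan is to reduce the corollary to Theorem \ref{enrichedCoreThm} by observing that the hypothesis on $\ev$ makes diagram (\ref{XRef-Equation-121162956}) (equivalently (\ref{XRef-Equation-121163025})) commute automatically.

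First I would unpack what it means for $\ev$ to be a poset: the underlying category has at most one morphism between any two objects. In particular, for any pair of parallel arrows $f,g : a \to b$ in $\ev$, we have $f = g$. Hence every diagram built from morphisms in $\ev$ commutes on the nose.

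Next I would apply this observation to (\ref{XRef-Equation-121162956}). The four objects
\[
\ba(A,B)\otimes\ba(FX,A),\quad \bx(UA,UB)\otimes\bx(X,UA),\quad \ba(FX,B),\quad \bx(X,UB)
\]
and the four arrows between them all live in $\ev$, so the two composites around the square are parallel morphisms in $\ev$ and therefore equal. Thus (\ref{XRef-Equation-121162956}) commutes for every adjunction core.

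Finally, by Theorem \ref{enrichedCoreThm} the given core extends (uniquely) to an adjunction. The main ``obstacle'' is really just identifying that a posetal $\ev$ trivialises every coherence diagram valued in $\ev$; once that is noted, the corollary is immediate from the preceding theorem.
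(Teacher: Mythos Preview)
Your proposal is correct and follows exactly the paper's own argument: since $\ev$ is a poset, any two parallel morphisms in $\ev$ coincide, so in particular diagram (\ref{XRef-Equation-121162956}) commutes, and Theorem~\ref{enrichedCoreThm} then yields the adjunction. The paper's proof is the one-line version of what you wrote.
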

\begin{proof}
All diagrams, including (\ref{XRef-Equation-121162956}),
commute in such a $\ev$. \qed
\end{proof}

\section{Adjunctions between monads}\label{abm} 

This section will discuss adjunctions in a particular bicategory 
$\mathrm{KL} (\bk)$ of monads in a bicategory $\bk$. 
The results will apply to adjunctions between categories internal to a category $\bc$ with pullbacks.

As well as defining bicategories B\'enabou [\ref{Ben1967}] defined, 
for each pair of bicategories $\ba$ and $\bk$, a bicategory $\mathrm{Bicat} (\ba,\bk)$ 
whose objects are morphisms $\ba \longrightarrow \bk$ of bicategories (also called lax functors), whose morphisms are transformations (also called lax natural transformations), 
and whose 2-cells are modifications. 
In particular, $\mathrm{Bicat} (\bf{1} ,\bk)$ is one bicategory whose objects are monads in $\bk$; 
it was called $\mathrm{Mnd}(\bk)$ in [\ref{FTM}] for the case of a 2-category $\bk$, 
where it was used to discuss Eilenberg-Moore objects in $\bk$. 
We shall also use the notation $\mathrm{Mnd}(\bk)$ when $\bk$ is a bicategory.  

We write $\bk^{\mathrm{op}}$ for the dual of $\bk$ obtained by reversing morphisms (not 2-cells). 
Monads in $\bk^{\mathrm{op}}$ are the same as monads in $\bk$. 
So we also have the bicategory 
$$\mathrm{Mnd}^{\mathrm{op}}(\bk) = \mathrm{Bicat} (\bf{1} ,\bk^{\mathrm{op}})^{\mathrm{op}}$$
whose objects are monads in $\bk$. This was used in [\ref{FTM}] to discuss Kleisli objects in $\bk$.

Two more bicategories $\mathrm{EM} (\bk)$ and $\mathrm{KL} (\bk)$, with objects monads in $\bk$, were defined in [\ref{FTMII}]. 
The first freely adjoins Eilenberg-Moore objects and the second freely adjoins Kleisli objects to $\bk$. 
In fact, $\mathrm{EM} (\bk)$ has the same objects and morphisms as $\mathrm{Mnd}(\bk)$ 
but different 2-cells while $\mathrm{KL} (\bk)$ has the same objects and morphisms as 
$\mathrm{Mnd}^{\mathrm{op}}(\bk)$ but different 2-cells.  

A {\it monad} in a bicategory $\bk$ is an object $A$ equipped with a morphism 
$s:A \longrightarrow A$ and 2-cells $\eta : 1_A \longrightarrow s$ and $\mu : ss \longrightarrow s$ 
such that 
\begin{equation}
\xymatrix{
& s(ss) \ar[rd]^-{ s \mu}  & \\
(ss)s \ar[ru]^-{\cong} \ar[d]_-{\mu s} & & ss \ar[d]^-{\mu} \\
ss \ar[rr]_-{\mu} & & w }
\end{equation}

\noindent and the composites
\begin{equation}
s 1\overset{s \eta }{\longrightarrow }s s\overset{\mu }{\longrightarrow
}s\ \ \ \ \ \mathrm{and}\ \ \ \ \ 1 s\overset{\eta  s}{\longrightarrow
}s s\overset{\mu }{\longrightarrow }s
\end{equation}
should be the canonical isomorphisms. 
We shall use the same symbols $\eta$ and $\mu$ for the {\it unit} and {\it multiplication} of all monads; so we simply write $(A,s)$ for the monad.

For monads $(A,s)$ and $(A^{\prime },s^{\prime })$ in $\bk$, a {\it monad opmorphism} $(f,\phi):(A,s) \longrightarrow (A^{\prime },s^{\prime })$ consists of a morphism $f:A \longrightarrow A^{\prime }$ and a 2-cell $\phi: fs \longrightarrow s^{\prime } f$ in $\bk$ such that
\begin{multline}\label{XRef-Equation-opmor1}
\left( \left( f s\right) s\overset{\phi  s}{\longrightarrow }\left(
s^{\prime }f\right) s\overset{\cong }{\longrightarrow }s^{\prime
}( f s) \overset{s^{\prime }\phi }{\longrightarrow }s^{\prime }(
s^{\prime } f) \overset{\cong }{\longrightarrow }s^{\prime }(  s^{\prime
}f) \overset{\mu  f}{\longrightarrow }s^{\prime }f\right) \\
=\left( \left( f s\right) s\overset{\cong }{\longrightarrow }f(
s s) \overset{f \mu }{\longrightarrow }f s\overset{\phi }{\longrightarrow
}s^{\prime } f\right) 
\end{multline}

\noindent and
\begin{equation}\label{XRef-Equation-opmor2}
\left( f 1\overset{f \eta }{\longrightarrow }f s\overset{\phi }{\longrightarrow
}s^{\prime } f\right) =\left( f 1\overset{\cong }{\longrightarrow
}1 f\overset{\eta  f}{\longrightarrow }s^{\prime } f\right)  .
\end{equation}
\noindent The composite of monad opmorphisms 
$(f,\phi):(A,s) \longrightarrow (A^{\prime },s^{\prime })$ 
and $(f^{\prime },\phi ^{\prime }):(A^{\prime },s^{\prime }) \longrightarrow (A^{\prime\prime },s^{\prime\prime })$ 
is defined to be $(f^{\prime }f,\phi ^{\prime } \star \phi):(A,s) \longrightarrow (A^{\prime\prime },s^{\prime\prime })$ 
where $\phi ^{\prime } \star \phi$ is the composite 
\begin{equation}
\left( f^{\prime }f\right) s\overset{\cong }{\longrightarrow }f^{\prime
}( f s) \overset{f^{\prime }\phi }{\longrightarrow }f^{\prime }(
s^{\prime } f) \overset{\cong }{\longrightarrow }\left( f^{\prime
} s^{\prime }\right) f\overset{\phi ^{\prime }f}{\longrightarrow
}\left( s^{{\prime\prime}}f^{\prime }\right) f\overset{\cong }{\longrightarrow
}s^{{\prime\prime}}( f^{\prime }f) \ \ .
\end{equation}

The objects of both $\mathrm{Mnd}^{\mathrm{op}}(\bk)$ and $\mathrm{KL}(\bk)$ are monads $(A,s)$ in $\bk$. 
The morphisms in both are the opmorphisms $(f, \phi)$. 
The 2-cells $\sigma : (f, \phi)\longrightarrow (g, \psi) : (A,s) \longrightarrow (A^{\prime},s^{\prime})$ 
in  $\mathrm{Mnd}^{\mathrm{op}}(\bk)$ are 2-cells $\sigma : f \longrightarrow g$ in $\bk$ 
such that the following square commutes.  
\begin{equation}
\xymatrix{
fs \ar[rr]^-{\phi} \ar[d]_-{\sigma s} && s^{\prime}f \ar[d]^-{s^{\prime} \sigma} \\
gs \ar[rr]_-{\psi} && s^{\prime}g}
\end{equation}

\noindent Vertical and horizontal composition in $\mathrm{Mnd}^{\mathrm{op}}(\bk)$ 
are performed in the obvious way so that the projection 
$\mathrm{Und} : \mathrm{Mnd}^{\mathrm{op}}(\bk) \longrightarrow \bk$ 
taking $(A,s)$ to $A$, $(f,\phi)$ to $f$, and $\sigma$ to $\sigma$. 
The associativity and unit isomorphisms in $\mathrm{Mnd}^{\mathrm{op}}(\bk)$ 
are also such that $\mathrm{Und}$ preserves them, making 
$\mathrm{Und}$ a strict morphism of bicategories.

A 2-cell $\rho : (f, \phi)\longrightarrow (g, \psi) : (A,s) \longrightarrow (A^{\prime},s^{\prime})$ in $\mathrm{KL}(\bk)$ is a 2-cell $\rho : f \longrightarrow s^{\prime} g$ in $\bk$ such that  
\begin{multline}\label{KL2cell}
\left( f s\overset{\phi }{\longrightarrow }s^{\prime }f \overset{s^{\prime
}\rho }{\longrightarrow }s^{\prime }( s^{\prime } g) \overset{\cong
}{\longrightarrow }\left( s^{\prime } s^{\prime }\right) g\overset{\mu
g}{\longrightarrow }s^{\prime }g\right) \\
=\left( f s\overset{\rho  s}{\longrightarrow }\left( s^{\prime }
g\right) s\overset{\cong }{\longrightarrow }s^{\prime } \left( g
s\right) \overset{s^{\prime } \psi }{\longrightarrow }s^{\prime
} \left( s^{\prime } g\right) \overset{\cong }{\longrightarrow }\left(
s^{\prime } s^{\prime }\right)  g\overset{\mu  g}{\longrightarrow
}s^{\prime } g\right) .
\end{multline}
\noindent The vertical composite of the 2-cells $\rho:(f,\phi) \longrightarrow (g,\psi)$ and $\tau:(g,\psi)\longrightarrow (h,\theta)$ is the 2-cell
\begin{equation}
f\overset{\rho }{\longrightarrow }s^{\prime }g\overset{s^{\prime
}\tau }{\longrightarrow }s^{\prime }( s^{\prime }h) \overset{\cong
}{\longrightarrow }\left( s^{\prime }s^{\prime }\right) h\overset{\mu
h}{\longrightarrow }s^{\prime }h\ \ .
\end{equation}
\noindent The horizontal composite of 2-cells $\rho : (f, \phi)\longrightarrow (g, \psi) : (A,s) \longrightarrow (A^{\prime},s^{\prime})$ and $\rho^{\prime} : (f^{\prime}, \phi^{\prime})\longrightarrow (g^{\prime}, \psi^{\prime}) : (A^{\prime},s^{\prime}) \longrightarrow (A^{\prime \prime},s^{\prime \prime})$ is the 2-cell
\begin{multline}
f^{\prime }f\overset{f^{\prime }\rho }{\longrightarrow }f^{\prime
}( s^{\prime }g) \overset{\cong }{\longrightarrow } \left( f^{\prime
}s^{\prime }\right) g\overset{\phi ^{\prime }g}{\longrightarrow
}\left( s^{{\prime\prime}}f^{\prime }\right) g\\
\overset{\left( s^{{\prime\prime}}\rho ^{\prime }\right) g}{\longrightarrow
}\left( s^{{\prime\prime}}( s^{{\prime\prime}}g^{\prime }) \right)
g\overset{\cong }{\longrightarrow }\left( s^{{\prime\prime}}s^{{\prime\prime}}\right)
\left( g^{\prime }g\right) \overset{\mu  \left( g^{\prime }g\right)
}{\longrightarrow }s^{{\prime\prime}}( g^{\prime }g) \ \ .
\end{multline}
\noindent Each 2-cell $\sigma :(f,\phi) \longrightarrow (g,\psi)$ 
in $\mathrm{Mnd}^{\mathrm{op}}(\bk)$ defines a 2-cell 
$\rho : (f, \phi) \longrightarrow (g, \psi)$ 
in $\mathrm{KL}(\bk)$ via $\rho = \eta g \cdot \sigma$.
The associativity and unit isomorphisms for $\mathrm{KL}(\bk)$ are
determined by the condition that we have a strict morphism of bicategories 
$$K: \mathrm{Mnd}^{\mathrm{op}}(\bk) \longrightarrow \mathrm{KL}(\bk)$$
which is the identity on objects and morphisms and takes each 2-cell $\sigma$
to $\eta g \cdot \sigma$. 

Henceforth we shall invoke the coherence theorem (see [\ref{MacLPar}] and [\ref{GPS}]) that every bicategory is biequivalent to a 2-category to write as if we were working in a 2-category
$\mathrm{KL}(\bk)$. We also recommend reworking the proofs below using the string diagrams of [\ref{GTC}] as adapted for bicategories in [\ref{HCSCSE}] and [\ref{CatStr}].

Now we are in a position to examine what is involved in an adjunction
\begin{equation}\label{AdjKL}
\left( f,\phi \right) \dashv \left( u,\upsilon \right) :\left( A,s\right)
\longrightarrow \left( X,t\right) 
\end{equation}
with counit $\alpha: (f,\phi)\cdot(u,\upsilon)\longrightarrow 1_{(A,s)}$ 
and unit $\beta: 1_{(X,t)} \longrightarrow (u,\upsilon)\cdot(f,\phi)$ in $\mathrm{KL}(\bk)$. 

We have morphisms $u:A \longrightarrow X$ and $f:X \longrightarrow A$ in $\bk$.
We have 2-cells $\upsilon : uf \longrightarrow tu$ 
and $\phi : ft \longrightarrow sf$ both satisfying (\ref{XRef-Equation-opmor1}) 
and (\ref{XRef-Equation-opmor2}) with the variables appropriately substituted.

We have a 2-cell $\alpha: fu \longrightarrow s$ satisfying
\begin{equation}\label{alpha}
\left( f u s\overset{\alpha  s}{\longrightarrow }s s\overset{\mu
}{\longrightarrow }s\right) =\left( f u s \overset{f \upsilon }{\longrightarrow
}f t u\overset{\phi  u}{\longrightarrow }s f u\overset{s \alpha
}{\longrightarrow }s s\overset{\mu }{\longrightarrow }s\right) 
\end{equation} 
which is (\ref{KL2cell}) for $\alpha$. 

We have a 2-cell $\beta : 1_X \longrightarrow tuf$ satisfying
\begin{equation}\label{beta}
\left( t\overset{t \beta }{\longrightarrow }t t u f\overset{\mu
u f}{\longrightarrow }t u f\right) =\left( t \overset{\beta  t}{\longrightarrow
}t u f t\overset{t u \phi }{\longrightarrow }t u s f\overset{t \upsilon
f}{\longrightarrow }t t u f\overset{\mu  u f}{\longrightarrow }t
u f\right) 
\end{equation}
which is (\ref{KL2cell}) for $\beta$. 

Using the rules for compositions in $\mathrm{KL}(\bk)$, 
we see that the two triangle conditions for the counit and unit of 
an adjunction become, in this case, the identities
\begin{equation}\label{triangle1}
\left( f\overset{\eta  f}{\longrightarrow }s f\right) =\left( f
\overset{f \beta }{\longrightarrow} ftuf \overset{\phi  u f}{\longrightarrow
}s f u f\overset{s \alpha  f}{\longrightarrow }s s f\overset{\mu
f}{\longrightarrow }s f\right) 
\end{equation}
and
\begin{equation}\label{triangle2}
\left( u\overset{\eta  u}{\longrightarrow }t u\right) =\left( u
\overset{\beta  u}{\longrightarrow }t u f u\overset{t u \alpha }{\longrightarrow
}t u s\overset{t \upsilon }{\longrightarrow }t t u\overset{\mu 
u}{\longrightarrow }t u\right) \ \ .
\end{equation}

It is common to call a morphism $f:X \longrightarrow A$ in a bicategory $\bk$ 
a \textit{map} when it has a right adjoint. We write $f^{\star} : A \longrightarrow X$
for a selected right adjoint, $\eta_f :1_X \longrightarrow f^{\star} f$ for the unit, and $\varepsilon_f :f f^{\star} \longrightarrow 1_A$ for the counit.

\begin{theorem}\label{ThmAdjKL}
Suppose (\ref{AdjKL}) is an adjunction in $\mathrm{KL}(\bk)$ with counit $\alpha$ and unit $\beta$, and suppose $f:X \longrightarrow A$ is a map in $\bk$. Then the composite 2-cell 
\begin{equation} \label{pi}
\pi  : f^{\star}s\overset{\beta  f^{\star}s}{\longrightarrow }t
u f f^{\star}s\overset{t u \varepsilon _{f}s}{\longrightarrow }t u s
\overset{t \upsilon }{\longrightarrow }t t u\overset{\mu  u}{\longrightarrow}t u
\end{equation}
in $\bk$ is invertible with inverse defined by the composite 2-cell
\begin{equation} \label{pi-inv}
\pi ^{-1} : t u\overset{\eta _{f} t u}{\longrightarrow }f^{\star}f t u\overset{f^{\star}\phi  u}{\longrightarrow }f^{\star}s f u\overset{f^{\star}s \alpha }{\longrightarrow }f^{\star}s s\overset{f^{\star}\mu }{\longrightarrow}f^{\star}s .
\end{equation}

\end{theorem}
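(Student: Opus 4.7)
The goal is to verify directly that $\pi$ and $\pi^{-1}$, as defined in the statement, are mutually inverse 2-cells in $\bk$.

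A useful preliminary observation: by the bicategorical interchange law, $\pi^{-1}$ factors as $f^\star\theta \cdot \eta_f tu$, where $\theta := \mu \cdot s\alpha \cdot \phi u : ftu \to s$. Thus $\pi^{-1}$ is the image of $\theta$ under the mate bijection $\bk(A,A)(ftu,s) \cong \bk(A,X)(tu, f^\star s)$ induced by $f \dashv f^\star$; equivalently, $\theta = \varepsilon_f s \cdot f\pi^{-1}$. This positions $\eta_f$ at the outer edge of $\pi^{-1}$, where it can eventually meet the $\varepsilon_f$ sitting inside $\pi$.

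For $\pi^{-1}\cdot\pi = 1_{f^\star s}$, I exploit the mate correspondence: the composite $\pi^{-1}\pi : f^\star s \to f^\star s$ has the form $g \to f^\star h$ (with $g = f^\star s$ and $h = s$), and by functoriality of the mate bijection its mate is $\varepsilon_f s \cdot f\pi^{-1} \cdot f\pi = \theta \cdot f\pi$. The identity $\pi^{-1}\pi = 1_{f^\star s}$ is therefore equivalent to
$$\mu \cdot s\alpha \cdot \phi u \cdot f\mu u \cdot ft\upsilon \cdot ftu\varepsilon_f s \cdot f\beta f^\star s = \varepsilon_f s .$$
This identity is established using the $\phi$-opmorphism axioms (\ref{XRef-Equation-opmor1}) and (\ref{XRef-Equation-opmor2}), the monad associativity and unit axioms for $s$, condition (\ref{alpha}), triangle identity (\ref{triangle1}) for the KL adjunction, and the triangle identity $\varepsilon_f f \cdot f\eta_f = 1_f$.

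For $\pi\cdot\pi^{-1} = 1_{tu}$ a direct verification is needed, since $\pi\pi^{-1}$ is not of a mate-able form. Expanding the composite and applying interchange brings the $\varepsilon_f$ inside $\pi$ and the outer $\eta_f$ of $\pi^{-1}$ into adjacent positions; the triangle identity $\varepsilon_f f \cdot f\eta_f = 1_f$ then collapses them, reducing the composite to $\mu u \cdot t\upsilon \cdot tu\theta \cdot \beta tu$. Showing this equals $1_{tu}$ proceeds by expanding $tu\theta = tu\mu \cdot tus\alpha \cdot tu\phi u$ and invoking, in sequence, condition (\ref{beta}) (suitably whiskered), condition (\ref{alpha}), the $\upsilon$-opmorphism axioms, and triangle identity (\ref{triangle2}); the monad unit axiom for $t$ delivers the final step $\mu u \cdot \eta tu = 1_{tu}$. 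The principal obstacle is orchestrating these long composites so that the cancellations become visible: many interchange steps and axiom invocations must be chained in just the right order, and the string-diagram calculus recommended in the paragraph preceding the theorem is essentially indispensable for managing the whiskerings and for spotting when each axiom should be applied.
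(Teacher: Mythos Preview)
Your plan is correct and follows essentially the same route as the paper. For $\pi\pi^{-1}$ you arrive (after the $f\dashv f^\star$ triangle) at exactly the paper's intermediate expression $\mu u\cdot t\upsilon\cdot tu\theta\cdot\beta tu$, and the remaining verification is the same chain of rewrites using \eqref{beta}, the $(u,\upsilon)$-opmorphism axiom, associativity of $\mu_t$, and \eqref{triangle2}; the appeal to \eqref{alpha} you list here is not actually needed. For $\pi^{-1}\pi$ your passage to mates, reducing the problem to $\theta\cdot f\pi=\varepsilon_f s$, is a tidy repackaging that strips the ambient $f^\star$ from the calculation; the paper instead keeps $f^\star$ throughout and computes directly, but the substance---the $(f,\phi)$-opmorphism axiom, \eqref{alpha}, one of the KL triangle identities, and the monad unit law for $s$---is the same. (The paper's prose cites \eqref{triangle2} for this half, but the expected duality and your reading both point to \eqref{triangle1}; either way the argument goes through.)
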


\begin{proof} 
Without yet knowing that $\pi^{-1}$ as given by (\ref{pi-inv}) is inverse to $\pi$, we calculate $\pi \pi^{-1}$: 
\begin{eqnarray*}
&& \mu u \cdot t \upsilon \cdot tu \varepsilon_f s \cdot \beta f^{\star} s \cdot f^{\star} \mu \cdot f^{\star} s \alpha \cdot f^{\star} \phi u \cdot \eta_f tu \\
& = & \mu u \cdot t \upsilon \cdot tu \mu \cdot tus \alpha \cdot tu \phi u \cdot tu \varepsilon_f ftu \cdot tuf \eta_f tu \cdot \beta tu \\
& = & \mu u \cdot t \upsilon \cdot tu \mu \cdot tus \alpha \cdot tu \phi u \cdot \beta tu \\
& = & \mu u \cdot \mu tu \cdot tt \upsilon \cdot t \upsilon s \cdot tus \alpha \cdot tu \phi u \cdot \beta tu \\
& = & \mu u \cdot t \upsilon \cdot tu \alpha \cdot \mu ufu \cdot t \upsilon fu \cdot tu \phi u \cdot \beta tu \\
& = & \mu u \cdot t \upsilon \cdot tu \alpha \cdot \mu ufu \cdot t \beta u \\
& = & \mu u \cdot t \upsilon \cdot \mu us  \cdot ttu \alpha \cdot t \beta u \\
& = & \mu u \cdot t \mu u \cdot tt \upsilon  \cdot ttu \alpha \cdot t \beta u \\
& = & \mu u \cdot t \eta u \\
& = & 1_{tu}  .
\end{eqnarray*}
The first, fourth, sixth and seventh equalities above follow purely from properties of composition in $\bk$. The second equality uses the triangular equation appropriate to the unit and counit for $f$ and its right adjoint. The third equality uses the opmorphism property of $(u,\upsilon)$ and associativity of $\mu$. The fifth equality uses (\ref{beta}). The eighth equality uses (\ref{triangle2}).   

Now we calculate $\pi^{-1} \pi $: 
\begin{eqnarray*}
&& f^{\star} \mu \cdot f^{\star} s \alpha \cdot f^{\star} \phi u \cdot \eta_f tu \cdot \mu u \cdot t \upsilon \cdot tu \varepsilon_f s \cdot \beta f^{\star} s \\
& = & f^{\star} \mu \cdot f^{\star} s \mu \cdot f^{\star} ss \alpha \cdot f^{\star} s \phi u \cdot f^{\star} \phi tu \cdot \eta_f ttu \cdot t \upsilon \cdot tu \varepsilon_f s  \cdot \beta f^{\star} s \\
& = & f^{\star} \mu \cdot f^{\star} \mu s \cdot f^{\star} s \alpha s \cdot f^{\star} \phi us \cdot \eta_f tus \cdot tu \eta_f s \beta f^{\star} s \\
& = & f^{\star} \mu \cdot f^{\star} s \varepsilon_f s \cdot f^{\star} \eta f f^{\star} s \cdot \eta_f f^{\star}s \\
& = & 1_{f^{\star} s}  ,
\end{eqnarray*}
using the associativity and unit conditions for the monads, the opmorphism property of $(f,\phi)$, equation (\ref{alpha}), and equation (\ref{triangle2}).   \qed

\end{proof}

As expected by general principles of doctrinal adjunction [\ref{Kelly1974}], a monad opmorphism $(f,\phi) : (X,t) \longrightarrow (A,s)$ for which $f$ is a map in $\bk$ gives rise to a monad morphism $(f^{\star},\hat{\phi}) : (A,s) \longrightarrow (X,t)$ where $\hat{\phi} : tf^{\star} \longrightarrow f^{\star}s$ is the mate of $\phi$ under the adjunction $f\dashv f^{\star }$ in the sense of [\ref{KelSt1974}].

\section{Cores between monads}\label{cbm} 

\begin{definition}
An {\it adjunction core} $(u,g,\pi)$ between monads $(A,s)$ and $(X,t)$ in a bicategory $\bk$
consists of the following data in $\bk$:
\begin{enumerate}
\item morphisms $u:A \longrightarrow X$ and $g:A \longrightarrow X$; \ \ 
\item an invertible 2-cell $\pi : gs \longrightarrow tu$. 
\end{enumerate}
\end{definition}

Given such a core, we make the following definitions:

\vspace*{5mm}

(a)  $\bar{\beta} : g \longrightarrow tu$ is the composite $$g \overset{g \eta} \longrightarrow gs \overset{\pi}\longrightarrow tu ;$$

(b)  $\bar{\alpha} : u \longrightarrow gs$ is the composite $$u \overset{\eta u} \longrightarrow tu \overset{\pi^{-1}}\longrightarrow gs ;$$

(c)  $\upsilon : us \longrightarrow tu$ is the composite $$us \overset{\bar{\alpha} s} \longrightarrow gss \overset{g \mu} \longrightarrow gs \overset{\pi} \longrightarrow tu ;$$

(d)   $\psi : tg \longrightarrow gs$ is the composite $$tg \overset{t \bar{\beta}} \longrightarrow ttu \overset{\mu u} \longrightarrow tu \overset{\pi^{-1}} \longrightarrow gs .$$

\begin{proposition}\label{AdjimpliesCore}
An adjunction core between monads $(A,s)$ and $(X,t)$ is obtained from the data of Theorem \ref{ThmAdjKL} by putting $g=f^{\star}$. Moreover, the $\bar{\beta}$ of (a) and the $\bar{\alpha}$ of (b) are the mates of the unit $\beta$ and counit $\alpha$, respectively, the composite in (c) recovers $\upsilon$, and the $\psi$ of (d) is the mate $\hat{\phi}$ of $\phi$.  
\end{proposition}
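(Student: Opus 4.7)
The core data presents itself immediately: $u$ and $g := f^{\star}$ are both morphisms $A \to X$, and Theorem \ref{ThmAdjKL} supplies the required invertible 2-cell $\pi : f^{\star}s \to tu$. So $(u,f^{\star},\pi)$ is an adjunction core, and the substance of the proposition is the four identifications (a)--(d). My plan is to dispatch (a) and (b) by direct calculation, and then use those identifications to handle (c) and (d).

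For (a), I unfold $\bar{\beta} = \pi \cdot f^{\star}\eta$ and substitute the formula (\ref{pi}) for $\pi$. Interchange slides $f^{\star}\eta$ past $\beta f^{\star}s$, and a further interchange collapses $\varepsilon_{f}s \cdot ff^{\star}\eta$ to $\eta \cdot \varepsilon_{f}$; the expression reduces to $\mu u \cdot t\upsilon \cdot tu\eta \cdot tu\varepsilon_{f} \cdot \beta f^{\star}$. The opmorphism axiom (\ref{XRef-Equation-opmor2}) for $(u,\upsilon)$ collapses $t\upsilon \cdot tu\eta$ to $t\eta u$, and the monad unit for $t$ then kills $\mu u \cdot t\eta u$, leaving the mate $tu\varepsilon_{f} \cdot \beta f^{\star}$. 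The proof of (b) is dual: unfold $\bar{\alpha} = \pi^{-1} \cdot \eta u$, slide $\eta u$ past $\eta_{f}tu$ by interchange, use (\ref{XRef-Equation-opmor2}) for $(f,\phi)$ to collapse $\phi \cdot f\eta$ to $\eta f$, slide $\eta$ past $\alpha$ into $\eta s \cdot \alpha$, and apply the monad unit $\mu \cdot \eta s = 1_{s}$; this yields $f^{\star}\alpha \cdot \eta_{f}u$, the mate of $\alpha$.

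For (c) and (d) the strategy is to substitute the identifications of (b) and (a) into the given composites and recognise $\pi\pi^{-1} = 1$ inside. In (c), substituting $\bar{\alpha} = f^{\star}\alpha \cdot \eta_{f}u$ gives $\pi \cdot f^{\star}\mu \cdot f^{\star}\alpha s \cdot \eta_{f}us$; equation (\ref{alpha}) rewrites $\mu \cdot \alpha s$ as $\mu \cdot s\alpha \cdot \phi u \cdot f\upsilon$, which after the interchange move $f^{\star}f\upsilon \cdot \eta_{f}us = \eta_{f}tu \cdot \upsilon$ exposes the full string of $\pi^{-1}$ followed by $\upsilon$; cancelling $\pi\pi^{-1}$ delivers $\upsilon$. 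Part (d) proceeds in parallel: substitute $\bar{\beta}$ from (a) into $\psi = \pi^{-1} \cdot \mu u \cdot t\bar{\beta}$, use interchange to rearrange $\mu u \cdot ttu\varepsilon_{f} = tu\varepsilon_{f} \cdot \mu uff^{\star}$, apply equation (\ref{beta}) to rewrite $\mu uf \cdot t\beta$ as $\mu uf \cdot t\upsilon f \cdot tu\phi \cdot \beta t$, and thread the result through further interchange moves and the triangle identity $f^{\star}\varepsilon_{f} \cdot \eta_{f}f^{\star} = 1_{f^{\star}}$ to arrive at $\hat{\phi} = f^{\star}s\varepsilon_{f} \cdot f^{\star}\phi f^{\star} \cdot \eta_{f}tf^{\star}$.

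The principal obstacle is the bookkeeping in (d): multiple interchange moves and triangle-identity applications must be threaded precisely through the composite to convert the unpacked form of $\psi$ into the mate $\hat{\phi}$. Expressing the argument in the string-diagram calculus recommended in the paper would make these cancellations visually transparent, but the algebraic version is nevertheless a matter of patient expansion of every whiskering.
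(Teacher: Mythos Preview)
Your proof is correct. Parts (a) and (b) follow the paper's own argument essentially verbatim: unfold $\pi$ (respectively $\pi^{-1}$), slide the monad unit past $\beta$ (respectively $\eta_f$) by interchange, invoke the opmorphism unit axiom for $(u,\upsilon)$ (respectively $(f,\phi)$), and cancel via the monad unit law.

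For (c) and (d) you take a genuinely different and more economical route than the paper. The paper first recomputes $\bar{\alpha}s$ explicitly and then carries out a twelve-line direct calculation for (c), appealing along the way to the opmorphism multiplicativity of $(u,\upsilon)$, associativity of $\mu$, and the triangle identity (\ref{triangle2}); it then declares (d) ``similar''. Your idea instead is to substitute the mate formulas from (a) and (b) and use the compatibility equations (\ref{alpha}) and (\ref{beta}) to expose the \emph{entire} string defining $\pi^{-1}$ (for (c)) or $\pi$ (for (d)) inside the composite, after a single interchange move; the outer $\pi$ (respectively $\pi^{-1}$) then cancels it, leaving $\upsilon$ (respectively $\hat{\phi}$). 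For (c) this collapses the paper's long calculation to three lines. For (d) the cancellation leaves a residual expression that must still be identified with the mate $\hat{\phi}$ via one application of the triangle identity $\varepsilon_f f \cdot f\eta_f = 1_f$ and a couple of interchanges --- exactly the ``bookkeeping'' you flag --- but this is routine and strictly shorter than the dual of the paper's (c). Your approach buys brevity and makes the role of the axioms (\ref{alpha}) and (\ref{beta}) transparent; the paper's approach is more self-contained in that it does not re-use the formulas (\ref{pi}) and (\ref{pi-inv}) as black boxes.
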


\begin{proof} 
That we have an adjunction core follows from the invertibility of $\pi$ according to Theorem \ref{ThmAdjKL}.
Next we look at the composite in (a):
\begin{eqnarray*}
&& \pi \cdot f^{\star} \eta \\
& = & \mu u \cdot t \upsilon \cdot tu \varepsilon_f s \cdot \beta f^{\star} s \cdot f^{\star} \eta \\
& = & \mu u \cdot t \upsilon \cdot tu \eta \cdot tu \varepsilon_f \cdot \beta f^{\star} \\
& = & \mu u \cdot t \eta u \cdot tu \varepsilon_f \cdot \beta f^{\star} \\
& = & tu \varepsilon_f \cdot \beta f^{\star} 
\end{eqnarray*}
which is the mate $\bar{\beta}$ of $\beta$.
That the composite in (b) gives $\bar{\alpha}$ is a similar calculation.     

Next we calculate: 
\begin{eqnarray*}
&& \pi^{-1} s \cdot \eta us \\
& = & f^{\star} \mu s \cdot f^{\star} s \alpha s \cdot f^{\star} \phi us \cdot \eta_f tus \cdot \eta us\\
& = & f^{\star} \mu s \cdot f^{\star} s \alpha s \cdot f^{\star} \phi us \cdot f^{\star} f \eta_f us \cdot \eta_f us \\
& = & f^{\star} \mu s \cdot f^{\star} s \alpha s \cdot f^{\star} \eta fus \cdot \eta_f us \\
& = & f^{\star} \mu s \cdot f^{\star} \eta ss \cdot f^{\star} \alpha s \cdot \eta_f us \\
& = & f^{\star} \alpha s \cdot \eta_f us ,
\end{eqnarray*}
so the composite in (c) is:   
\begin{eqnarray*}
&& \mu u \cdot t \upsilon \cdot tu \varepsilon_f s \cdot \beta f^{\star} sa \cdot f^{\star} \mu \cdot \pi^{-1} s \cdot \eta us \\
& = & \mu u \cdot t \upsilon \cdot tu \varepsilon_f s \cdot \beta f^{\star} s \cdot f^{\star} \mu \cdot f^{\star} \alpha s \cdot \eta_f us \\
& = & \mu u \cdot t \upsilon \cdot tu \varepsilon_f s \cdot tuf f^{\star} \mu \cdot \beta f^{\star} ss \cdot f^{\star} \alpha s \cdot \eta_f us \\
& = & \mu u \cdot t \upsilon \cdot tu \varepsilon_f s \cdot tuf f^{\star} \mu \cdot tuf f^{\star} \alpha s \cdot \beta f^{\star} fus \cdot \eta_f us \\
& = & \mu u \cdot t \upsilon \cdot tu \mu \cdot tu \varepsilon_f ss \cdot tuf f^{\star} \alpha s \cdot \beta f^{\star} fus \cdot \eta_f us \\
& = & \mu u \cdot t \mu u \cdot tt \upsilon \cdot t \upsilon s \cdot tu \varepsilon_f ss \cdot tuf f^{\star} \alpha s \cdot \beta f^{\star} fus \cdot \eta_f us \\
& = & \mu u \cdot t \mu u \cdot tt \upsilon \cdot t \upsilon s \cdot tu \alpha s \cdot tu \varepsilon_f fus \cdot \beta f^{\star} fus \cdot \eta_f us \\
& = & \mu u \cdot t \mu u \cdot tt \upsilon \cdot t \upsilon s \cdot tu \alpha s \cdot tu \varepsilon_f fus \cdot tuf \eta_f s \cdot \beta us \\
& = & \mu u \cdot t \mu u \cdot tt \upsilon \cdot t \upsilon s \cdot tu \alpha s \cdot \beta us \\
& = & \mu u \cdot t \upsilon \cdot \mu us \cdot t \upsilon s \cdot tu \alpha s \cdot \beta us \\
& = & \mu u \cdot t \upsilon \cdot \eta us \\
& = & \mu u \cdot \eta tu \cdot \upsilon \\
& = & \upsilon ,
\end{eqnarray*}   
as required. The calculation for the composite in (d) is similar.   \qed
\end{proof}

The Corollary of the following result should be compared with Theorem \ref{enrichedCoreThm}.

\begin{theorem} \label{Core-implies-lots}
For an adjunction core $(u,g,\pi)$ between monads $(A,s)$ and $(X,t)$ in a bicategory $\bk$, the following two commutativity conditions (\ref{pi-cond1}) and (\ref{pi-cond2}) are equivalent.

\begin{equation}\label{pi-cond1}
\xymatrix{
& tus \ar[rd]^-{ t \upsilon}  & \\
gss \ar[ru]^-{\pi s} \ar[d]_-{g \mu} & & ttu \ar[d]^-{\mu u} \\
g s \ar[rr]_-{\pi} & & tu }
\end{equation}

\begin{equation}\label{pi-cond2}
\xymatrix{
& tgs \ar[rd]^-{\psi s}  & \\
ttu \ar[ru]^-{t \pi^{-1}} \ar[d]_-{\mu u} & & gss \ar[d]^-{g \mu} \\
tu \ar[rr]_-{\pi^{-1}} & & gs } 
\end{equation}

Moreover, under these conditions, using definitions (a), (b), (c) and (d),

(i) $(u, \upsilon) : (A,s) \longrightarrow (X,t)$ is a monad opmorphism;

(ii) $(g, \psi) : (A,s) \longrightarrow (X,t)$ is a monad morphism;

(iii) $\pi$ is equal to the composite $$gs \overset{\bar{\beta} s} \longrightarrow tus \overset{t \upsilon} \longrightarrow ttu \overset{\mu u} \longrightarrow tu ;$$ 

(iv) $\pi^{-1}$ is equal to the composite $$tu \overset{t \bar{\alpha}} \longrightarrow tgs \overset{\psi s} \longrightarrow gss \overset{g \mu} \longrightarrow gs ;$$ 

(v) the following identity holds $$(us \overset{\bar{\alpha} s} \longrightarrow gss \overset{g \mu} \longrightarrow gs) = (us \overset{\upsilon} \longrightarrow tu \overset{t \bar{\alpha}} \longrightarrow tgs \overset{\psi s} \longrightarrow gss \overset{g \mu} \longrightarrow gs) ;$$

(vi) the following identity holds $$(tg \overset{t \bar{\beta}} \longrightarrow ttu \overset{\mu u} \longrightarrow tu) = (tg \overset{\psi} \longrightarrow gs \overset{\bar{\beta} s} \longrightarrow tus \overset{t \upsilon} \longrightarrow ttu \overset{\mu u} \longrightarrow tu) ;$$

(vii) the following identity holds $$(g \overset{g \eta} \longrightarrow gs) = (g \overset{\bar{\beta}} \longrightarrow tu \overset{t \bar{\alpha}} \longrightarrow tgs \overset{\psi s} \longrightarrow gss \overset{g \mu} \longrightarrow gs); $$

(viii) the following identity holds $$(u \overset{\eta u} \longrightarrow tu) = (u \overset{\bar{\alpha}} \longrightarrow gs \overset{\bar{\beta} s} \longrightarrow tus \overset{t \upsilon} \longrightarrow ttu \overset{\mu u} \longrightarrow tu). $$

\end{theorem}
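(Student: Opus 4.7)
The overall strategy is to reduce the theorem to the equivalence of (\ref{pi-cond1}) and (\ref{pi-cond2}), which I expect to be the only genuinely nontrivial step; all of (i)--(viii) will then follow by short, mostly mechanical computations. Since (\ref{pi-cond1}) and (\ref{pi-cond2}) are exchanged by the involution $\pi \leftrightarrow \pi^{-1}$, $s \leftrightarrow t$, $u \leftrightarrow g$, $\upsilon \leftrightarrow \psi$, under which the pairs $\{(\text{i}),(\text{ii})\}$, $\{(\text{iii}),(\text{iv})\}$, $\{(\text{v}),(\text{vi})\}$, $\{(\text{vii}),(\text{viii})\}$ are also exchanged, it suffices to prove one implication and one member of each pair.

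To show that (\ref{pi-cond1}) implies (\ref{pi-cond2}), I plan to first derive the equivalent form $g\mu \cdot \pi^{-1}s = \pi^{-1} \cdot \mu u \cdot t\upsilon$ by pre- and post-composing (\ref{pi-cond1}) with $\pi^{-1}$. Then, expanding $\psi$ in $g\mu \cdot \psi s \cdot t\pi^{-1}$ via definition (d) and $\bar{\beta} = \pi \cdot g\eta$ from (a), I apply the equivalent form once, rewrite $t\upsilon \cdot \mu us$ as $\mu tu \cdot tt\upsilon$ by middle-four interchange, and use associativity of $\mu$ in the form $\mu u \cdot \mu tu = \mu u \cdot t\mu u$ to pull a $t$ outside the entire expression. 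The remaining inner factor $\mu u \cdot t\upsilon \cdot \pi s \cdot g\eta s \cdot \pi^{-1}$ then collapses: (\ref{pi-cond1}) turns its head into $\pi \cdot g\mu$, and the monad unit law $\mu \cdot \eta s = 1_s$ makes the middle disappear, leaving $\pi \cdot \pi^{-1} = 1_{tu}$. This yields precisely (\ref{pi-cond2}).

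Given the equivalent conditions, (iii) is a one-line calculation: $\mu u \cdot t\upsilon \cdot \bar{\beta}s = \mu u \cdot t\upsilon \cdot \pi s \cdot g\eta s = \pi \cdot g\mu \cdot g\eta s = \pi$. With (iii) and its dual (iv) in hand, each of (v)--(viii) collapses immediately, since every long composite contains (iii) or (iv) hidden inside and so reduces to an expression already known to equal the short side from definitions (a)--(c); for instance, both sides of (v) equal $\pi^{-1} \cdot \upsilon$, and both sides of (vii) equal $g\eta$ via $\pi^{-1} \cdot \bar{\beta} = g\eta$. For (i), the opmorphism unit axiom for $(u,\upsilon)$ reduces by interchange and the unit law for $s$ to $\pi \cdot \bar{\alpha} = \eta u$, which is exactly (b); the multiplication axiom follows by expanding $\upsilon \cdot u\mu$ via (c), using interchange and associativity of $\mu$ to recognize the trailing whiskers as $\upsilon s$, and applying (\ref{pi-cond1}) once. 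The main obstacle is the equivalence itself: both conditions are triple-composites in which $\pi$ and $\pi^{-1}$ appear in non-adjacent positions, so two nested uses of the hypothesis are required and must be carefully sequenced with interchange and monad associativity. I would organize the calculation using the string-diagram methods cited just before the theorem, where the pivotal identity $\mu u \cdot t\upsilon \cdot \pi s \cdot g\eta s \cdot \pi^{-1} = 1_{tu}$---essentially the statement that (\ref{pi-cond1}) is compatible with the unit of $s$---becomes nearly transparent.
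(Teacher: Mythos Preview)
Your proposal is correct and follows essentially the same route as the paper. The only cosmetic difference is the entry point for the equivalence: the paper computes $\pi \cdot g\mu \cdot \psi s$ and shows it equals $\mu u \cdot t\pi$, whereas you compute $g\mu \cdot \psi s \cdot t\pi^{-1}$ and show it equals $\pi^{-1}\cdot \mu u$; the intermediate manipulations (expanding $\psi$, the interchange $t\upsilon \cdot \mu us = \mu tu \cdot tt\upsilon$, associativity of $\mu$, and the collapse via $\mu u \cdot t\upsilon \cdot \pi s \cdot g\eta s = \pi$) are identical, and your treatment of (i)--(viii) and the duality argument match the paper's exactly.
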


\begin{proof} 
Assuming (\ref{pi-cond1}) at the first step, we have the calculation: 
\begin{eqnarray*}
&& \pi \cdot g \mu \cdot \psi s \\
& = & \mu u \cdot t \upsilon \cdot \pi s \cdot \psi s \\
& = & \mu u \cdot t \upsilon \cdot \pi s \cdot \pi^{-1} s \cdot \mu us \cdot t \bar{\beta} s \\
& = & \mu u \cdot t \upsilon \cdot \mu us \cdot t \bar{\beta} s \\
& = & \mu u \cdot \mu tu \cdot tt \upsilon \cdot t \bar{\beta} s \\
& = & \mu u \cdot t \mu u \cdot tt \upsilon \cdot t \bar{\beta} s \\
& = & \mu u \cdot t \pi  ,
\end{eqnarray*}
proving (\ref{pi-cond2}). The converse is dual.   

(i) Using (\ref{pi-cond1}) at the second step, we have the calculation:
\begin{eqnarray*}
&& \mu u \cdot t \upsilon \cdot \upsilon s  \\
& = &  \mu u \cdot t \upsilon \cdot \pi s \cdot g \mu s \cdot \bar{\alpha} ss  \\
& = &  \pi \cdot g \mu \cdot g \mu s \cdot \bar{\alpha} ss  \\
& = &  \pi \cdot g \mu \cdot gs \mu \cdot \bar{\alpha} ss  \\
& = &  \pi \cdot g \mu \cdot \bar{\alpha} s \cdot  u \mu  \\
& = & \upsilon \cdot u \mu .
\end{eqnarray*}  
We also have:
\begin{eqnarray*}
&& \upsilon \cdot u \eta  \\
& = &  \pi \cdot g \mu \cdot \bar{\alpha} s \cdot u \eta   \\
& = &  \pi \cdot g \mu \cdot \pi^{-1} s \cdot \eta us \cdot u \eta  \\
& = &  \pi \cdot g \mu \cdot \pi^{-1} s \cdot tu \eta \cdot \eta u   \\
& = &  \pi \cdot g \mu \cdot gs \eta \cdot \pi^{-1} \cdot \eta u  \\
& = &  \pi \cdot \pi^{-1} \cdot \eta u  \\
& = &   \eta u .
\end{eqnarray*}  
Hence $(u,\upsilon)$ is a monad opmorphism.

(ii) This is dual to (i) using (\ref{pi-cond2}) instead of (\ref{pi-cond1}). 

(iii) Using (\ref{pi-cond1}), we have:
\begin{eqnarray*}
&& \pi \\
& = & \pi \cdot g \mu \cdot g \eta s \\
& = &  \mu u \cdot t \upsilon \cdot \pi s \cdot g \eta s \\
& = &  \mu u \cdot t \upsilon \cdot \bar{\beta} s .
\end{eqnarray*}

(iv) This is dual to (iii) using (\ref{pi-cond2}) instead of (\ref{pi-cond1}). 

(v) Using (iv), we immediately have:
\begin{eqnarray*}
&& g \mu \cdot \psi s \cdot t \bar{\alpha} \cdot \upsilon \\
& = & \pi^{-1} \cdot \upsilon \\
& = & \pi^{-1} \cdot \pi \cdot g \mu \cdot \bar{\alpha} s \\
& = & g \mu \cdot \bar{\alpha} s .
\end{eqnarray*}

(vi) This is dual to (v) using (iii) instead of (iv).

(vii) Using (iv), we immediately have:
\begin{eqnarray*}
&& g \mu \cdot \psi s \cdot t \bar{\alpha} \cdot \bar{\beta} \\
& = & \pi^{-1} \cdot \bar{\beta} \\
& = & \pi^{-1} \cdot \pi \cdot g \eta \\
& = & g \eta .
\end{eqnarray*}

(viii) This is dual to (vii) using (iii) instead of (iv).   \qed

\end{proof}

\begin{corollary} \label{KLCoreThm}
An adjunction core of the form $(u,f^{\star},\pi)$ between monads $(A,s)$ and $(X,t)$ in a bicategory $\bk$  extends to an adjunction (\ref{AdjKL}) in $\mathrm{KL}(\bk)$ if and only if one of the diagrams (\ref{pi-cond1}) or (\ref{pi-cond2}) commutes. The adjunction is unique when it exists.
\end{corollary}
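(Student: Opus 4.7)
The plan is to reduce this corollary to the three preceding results: Theorem \ref{ThmAdjKL} (which shows an adjunction in $\mathrm{KL}(\bk)$ produces an invertible $\pi$), Proposition \ref{AdjimpliesCore} (which reconstructs the adjunction's data from the core constructions (a)--(d)), and Theorem \ref{Core-implies-lots} (which, under either commutativity condition, packages the core into a monad opmorphism $(u,\upsilon)$, a monad morphism $(f^{\star},\psi)$, and the identities (iii)--(viii)).

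For the \emph{necessity} direction, suppose $(u,f^{\star},\pi)$ arises from an adjunction \eqref{AdjKL}. By Proposition \ref{AdjimpliesCore}, the composite in (c) recovers the adjunction's $\upsilon$, and $(u,\upsilon)$ is a monad opmorphism. Unpacking the opmorphism axiom \eqref{XRef-Equation-opmor1} for $(u,\upsilon)$ using the definition of $\upsilon$ and the invertibility of $\pi$ yields precisely diagram \eqref{pi-cond1}; then \eqref{pi-cond2} follows from the equivalence proved at the start of Theorem \ref{Core-implies-lots}.

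For \emph{sufficiency}, assume \eqref{pi-cond1} holds. Theorem \ref{Core-implies-lots}(i), (ii) already supplies the monad opmorphism $(u,\upsilon)$ and monad morphism $(f^{\star},\psi)$. Using $f \dashv f^{\star}$ in $\bk$, take $\phi : ft \longrightarrow sf$ to be the mate of $\psi$; the mate correspondence between monad morphisms on $f^{\star}$ and monad opmorphisms on $f$ invoked after Theorem \ref{ThmAdjKL} makes $(f,\phi)$ a monad opmorphism. Define the counit and unit as the mates
\begin{equation*}
\alpha = (fu \overset{f\bar{\alpha}}\longrightarrow ff^{\star}s \overset{\varepsilon_f s}\longrightarrow s), \qquad \beta = (1_X \overset{\eta_f}\longrightarrow f^{\star}f \overset{\bar{\beta} f}\longrightarrow tuf)
\end{equation*}
of the core's $\bar{\alpha}$ and $\bar{\beta}$. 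One then checks that the KL 2-cell conditions \eqref{alpha} and \eqref{beta} translate, under the mate correspondence for $f \dashv f^{\star}$, into identities (v) and (vi) of Theorem \ref{Core-implies-lots}, while the triangle identities \eqref{triangle1} and \eqref{triangle2} translate into (vii) and (viii).

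Uniqueness follows because Proposition \ref{AdjimpliesCore} pins down $\upsilon, \psi, \bar{\alpha}, \bar{\beta}$ from the core, after which $\phi, \alpha, \beta$ are forced as mates under the chosen adjunction $f \dashv f^{\star}$. The main obstacle is the mate-correspondence bookkeeping: verifying that the KL and triangle identities for $\alpha, \beta, \phi$ are exactly the mates of (v)--(viii). Each translation is routine in principle but wants the string-diagram calculus recommended earlier in the paper to stay readable.
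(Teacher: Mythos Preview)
Your proposal is correct and follows essentially the same approach as the paper. The paper's proof is the two-sentence compression of what you have written out: for sufficiency, properties (i)--(viii) of Theorem~\ref{Core-implies-lots}, once re-expressed with the mates $\alpha$, $\beta$, $\phi$ in place of $\bar{\alpha}$, $\bar{\beta}$, $\psi$, are exactly the opmorphism axioms, the 2-cell conditions \eqref{alpha}, \eqref{beta}, and the triangle identities \eqref{triangle1}, \eqref{triangle2}; for the converse the paper simply cites Proposition~\ref{AdjimpliesCore}. Your explicit pairing of (v)--(viii) with \eqref{alpha}, \eqref{beta}, \eqref{triangle1}, \eqref{triangle2} is the content the paper leaves to the reader, and your remark that the mate bookkeeping is where the work lies matches the paper's recommendation to use string diagrams.
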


\begin{proof} 
Properties (i)--(viii) of Theorem \ref{Core-implies-lots}, when re-expressed with the mates $\alpha$, $\beta$ and $\phi$ replacing  $\bar{\alpha}$, $\bar{\beta}$ and $\psi$, give precisely what is required for an adjunction (\ref{AdjKL}). The converse is Proposition \ref{AdjimpliesCore}.  \qed
\end{proof}

\section{Cores between internal categories}\label{cbic}

This section will apply our results to categories internal to a category $\bc$ which admits pullbacks. For this example, we take the bicategory $\bk$ of the previous sections to be bicategory $\mathrm{Span}(\bc)$ of spans in $\bc$ as constructed by B\'enabou in [\ref{Ben1967}].   

The objects of the bicategory $\mathrm{Span}(\bc)$ are those of $\bc$. A morphism $S = (s_0, S, s_1) : U \longrightarrow V$ is a so-called span $$U \overset{s_0} \longleftarrow S \overset{s_1} \longrightarrow V$$ from $U$ to $V$ in $\bc$. A 2-cell $r : (s_0, S, s_1) \longrightarrow (t_0, T, t_1) : U \longrightarrow V$ is a morphism $r:S \longrightarrow T$ in $\bc$ such that $t_0 r = s_0$ and $t_1 r = s_1$. Vertical composition of 2-cells is simply that of $\bc$. Horizontal composition uses pullback in $\bc$; more precisely, $$(U \overset{(s_0,S,s_1)} \longrightarrow V \overset{(t_0,T,t_1)} \longrightarrow W) = (U \overset{(s_0 p,P,t_1 q)} \longrightarrow W)$$ where
\begin{equation}
\xymatrix{
P \ar[rr]^-{q} \ar[d]_-{p} && T \ar[d]^-{t_0} \\
S \ar[rr]_-{s_1} && V}
\end{equation}  
is a pullback square.   

Each morphism $f:U \longrightarrow V$ in $\bc$ determines a span $f_{\star} = (1_U, U, f) : U \longrightarrow V$. We write $f^{\star} : V \longrightarrow U$ for the span $(f, V, 1_V) : V \longrightarrow U$. It is well known that we have an adjunction $f_{\star} \dashv f^{\star}$ in $\mathrm{Span}(\bc)$; in fact, it is shown in [\ref{CKS}] that the maps in $\mathrm{Span}(\bc)$ are all isomorphic to spans of the form $f_{\star} : U \longrightarrow V$ for some $f:U \longrightarrow V$ in $\bc$. 

One of the reasons for interest in the free Kleisli object cocompletion $\mathrm{KL}(\bk)$ in the paper [\ref{FTMII}] is that the 2-category $\mathrm{Cat}(\bc)$ of categories in $\bc$ is equivalent to the sub-2-category of $\mathrm{KL}(\mathrm{Span}(\bc))$ obtained by restricting to the morphisms whose underlying morphisms in $\mathrm{Span}(\bc)$ are maps. We shall explain this in a bit more detail.   

A \textit{category} in $\bc$ is a monad $(A,S)$ in $\mathrm{Span}(\bc))$. The object $A$ of $\bc$ is called the \textit{object of objects}. The span $S = (s_0, S, s_1) : A \longrightarrow A$ provides the \textit{object of morphisms} $S$ and the \textit{source and target operations} $s_0$ and $s_1$. The multiplication for the monad provides the \textit{composition operation} and the unit for the monad provides the \textit{identities operation}.      

A \textit{functor} between categories in $\bc$ is a monad opmorphism of the form $(f_{\star},\phi) : (X,T) \longrightarrow (A,S)$ in $\mathrm{Span}(\bc))$. The morphism $f:X \longrightarrow A$ in $\bc$ is called the \textit{effect on objects} of the functor and the morphism $\phi : T \longrightarrow S$ in $\bc$ is called the \textit{effect on morphisms} of the functor.    

A \textit{natural transformation} between functors in $\bc$ is precisely a 2-cell between them in $\mathrm{KL}(\mathrm{Span}(\bc))$.   

As an immediate consequence of Corollary \ref{KLCoreThm} we have:

\begin{corollary} \label{IntCatCoreThm}
An adjunction core of the form $(u_{\star},f^{\star},\pi)$ between categories $(A,S)$ and $(X,T)$ in a category $\bc$  extends to an adjunction $(f_{\star}, \phi)  \dashv (u_{\star}, \upsilon)$ in $\mathrm{Cat}(\bc)$ if and only if one of the diagrams (\ref{pi-cond1}) or (\ref{pi-cond2}) \emph{mutatis mutandis} commutes. The adjunction is unique when it exists. \qed
\end{corollary}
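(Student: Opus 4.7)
The plan is to apply Corollary \ref{KLCoreThm} with $\bk = \mathrm{Span}(\bc)$ and then transport the result across the equivalence, recalled earlier in this section, between $\mathrm{Cat}(\bc)$ and the sub-2-category of $\mathrm{KL}(\mathrm{Span}(\bc))$ whose 1-morphisms have underlying span equal to a map. Under this dictionary, categories in $\bc$ are monads in $\mathrm{Span}(\bc)$, and functors are precisely those monad opmorphisms $(f_{\star},\phi)$ whose first component is a map; 2-cells match on the nose.

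First I would observe that an adjunction core of the form $(u_{\star},f^{\star},\pi)$ between the internal categories $(A,S)$ and $(X,T)$ is, word for word, an adjunction core in the sense of Section \ref{cbm} between the monads $(A,S)$ and $(X,T)$ in $\mathrm{Span}(\bc)$. The shape hypothesis required by Corollary \ref{KLCoreThm} is automatic: $f^{\star}$ is by construction the selected right adjoint in $\mathrm{Span}(\bc)$ of the map $f_{\star}$ associated to the $\bc$-morphism $f : X \longrightarrow A$. The derived 2-cells $\upsilon$ and $\psi$ of Section \ref{cbm}, together with the diagrams (\ref{pi-cond1}) and (\ref{pi-cond2}) to which the statement refers \emph{mutatis mutandis}, are then simply computed and read inside $\mathrm{Span}(\bc)$.

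Next I would feed this into Corollary \ref{KLCoreThm}: commutativity of either diagram produces a unique adjunction $(f_{\star},\phi) \dashv (u_{\star},\upsilon) : (A,S) \longrightarrow (X,T)$ in $\mathrm{KL}(\mathrm{Span}(\bc))$. The two 1-morphisms of this adjunction have underlying spans $f_{\star}$ and $u_{\star}$, both maps in $\mathrm{Span}(\bc)$ by construction, so the whole adjunction lies in the distinguished sub-2-category and is therefore an adjunction in $\mathrm{Cat}(\bc)$. The converse direction, that any such adjunction in $\mathrm{Cat}(\bc)$ produces a core satisfying (\ref{pi-cond1}), is Proposition \ref{AdjimpliesCore} read in $\mathrm{Span}(\bc)$, and uniqueness is inherited from Corollary \ref{KLCoreThm}. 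Because the argument is pure translation there is no genuine obstacle; the only mild care is the syntactic unwinding of \emph{mutatis mutandis}, which here means substituting the internal-categorical data $(u_{\star},f^{\star},S,T)$ for the monad-theoretic data $(u,g,s,t)$ in the earlier diagrams.
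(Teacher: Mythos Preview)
Your proposal is correct and follows exactly the approach the paper intends: the paper offers no proof beyond the sentence ``As an immediate consequence of Corollary \ref{KLCoreThm} we have'' together with the \qed, and you have simply spelled out the translation through $\bk=\mathrm{Span}(\bc)$ and the identification of $\mathrm{Cat}(\bc)$ with the map-morphism sub-2-category of $\mathrm{KL}(\mathrm{Span}(\bc))$.
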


\section{A doctrinal setting}\label{doctrinal}   

The idea of adapting the 2-cells of $\mathrm{KL}(\bk)$ or $\mathrm{EM}(\bk)$ to the doctrinal setting was recently exposed by Mark Weber [\ref{WeberACS}].

Let $D$ be a pseudomonad (also called a doctrine in [\ref{Law1969}], [\ref{Kelly1974}], [\ref{Zoeb}] and [\ref{FiB}]) on a bicategory $\bk$. It means that we have a pseudofunctor $D : \bk \longrightarrow \bk$, a unit pseudonatural transformation denoted by $n : 1_{\bk} \longrightarrow D$, and a multiplication pseudonatural transformation denoted by  $m : DD \longrightarrow D$. For example, see [\ref{Marm}] [\ref{Lack2000}] for the axioms. 

A \textit{lax $D$-algebra} $(A,s)$ consists of an object $A$, a morphism $s:DA \longrightarrow A$, and 2-cells $\mu : s \cdot Ds \Longrightarrow s \cdot m_A$ and  $\eta : 1_A \Longrightarrow s \cdot n_A$, satisfying coherence conditions.  

For lax $D$-algebras $(A,s)$ and $(A^{\prime },s^{\prime })$ in $\bk$, a {\it lax opmorphism} $(f,\phi):(A,s) \longrightarrow (A^{\prime },s^{\prime })$ consists of a morphism $f:A \longrightarrow A^{\prime }$ and a 2-cell $\phi: fs \Longrightarrow s^{\prime } Df$ in $\bk$ such that

\begin{equation} \label{laxopmor1}
\xymatrix{
& s^{\prime}Df \cdot Ds \ar[r]^-{s^{\prime} D\phi} & s^{\prime} Ds^{\prime} \cdot D^{2} f \ar[rd]^-{\mu D^{2} f} & \\
fsDs \ar[ru]^-{\phi Ds} \ar[rd]_-{f \mu} & & & s^{\prime} m_{A^{\prime }} D^{2} f \\
& fsm_A \ar[r]_-{\phi m_A} & s^{\prime} Df \cdot m_A \ar[ru]_-{s^{\prime} m_f} &}
\end{equation}

\begin{equation} \label{laxopmor2}
\xymatrix{
fsn_A \ar[rr]^-{\phi n_A}  && s^{\prime} Df \cdot n_A \ar[d]^-{s^{\prime} n_f} \\
f \ar[rr]_-{\eta f} \ar[u]_-{f \eta} && s^{\prime} n_{A^{\prime}}}
\end{equation}

\noindent The composite of lax opmorphisms 
$(f,\phi):(A,s) \longrightarrow (A^{\prime },s^{\prime })$ 
and $(f^{\prime },\phi ^{\prime }):(A^{\prime },s^{\prime }) \longrightarrow (A^{\prime\prime },s^{\prime\prime })$ 
is defined to be $(f^{\prime }f,\phi ^{\prime } \star \phi):(A,s) \longrightarrow (A^{\prime\prime },s^{\prime\prime })$ 
where $\phi ^{\prime } \star \phi$ is the composite 
\begin{equation}
f^{\prime} f s \overset{f^{\prime} \phi} \longrightarrow f^{\prime}
s^{\prime} Df \overset{\phi^{\prime} Df} \longrightarrow
s^{{\prime\prime}} Df^{\prime} \cdot Df \ \ .
\end{equation}
 
There is a bicategory $\mathrm{KL}(\bk,D)$ whose objects are lax $D$-algebras and whose morphisms are lax opmorphisms. A 2-cell $\rho : (f, \phi)\Longrightarrow (g, \psi) : (A,s) \longrightarrow (A^{\prime},s^{\prime})$ in $\mathrm{KL}(\bk,D)$ (in non-reduced form, in the terminology of \ref{FTMII}) is a 2-cell $\hat{\rho} : f s \Longrightarrow s^{\prime} Dg$ in $\bk$ such that the diagrams (\ref{KLD2cell1}) and (\ref{KLD2cell2}) commute.  

\begin{equation}\label{KLD2cell1}
\xymatrix{
& s^{\prime} Dg \cdot Ds \ar[r]^-{s^{\prime} D\psi} & s^{\prime} Ds^{\prime} D^{2} g \ar[rd]^-{\mu D^{2} g} & \\
 fsDs \ar[ru]^-{\hat{\rho} Ds} \ar[rd]_-{f \mu} & & & s^{\prime} m_{A^{\prime}} D^{2} g\\
& fsm_A \ar[r]_-{\hat{\rho} m_A} & s^{\prime} Dg \cdot m_A \ar[ru]_-{s^{\prime} m_g}  &}
\end{equation}

\begin{equation}\label{KLD2cell2}
\xymatrix{
& s^{\prime} Df \cdot Ds \ar[r]^-{s^{\prime} D \hat{\rho}} & s^{\prime} Ds^{\prime} D^{2} g \ar[rd]^-{\mu D^{2} g} & \\
 fsDs \ar[ru]^-{\phi Ds} \ar[rd]_-{f \mu} & & & s^{\prime} m_{A^{\prime}} D^{2} g\\
& fsm_A \ar[r]_-{\hat{\rho} m_A} & s^{\prime} Dg \cdot m_A \ar[ru]_-{s^{\prime} m_g}  &}
\end{equation}
The reduced form of the 2-cell in $\mathrm{KL}(\bk,D)$ is a 2-cell $\rho : f  \Longrightarrow s^{\prime} Dg \cdot n_A$ in $\bk$ such that equality (\ref{KLD2cellred}) holds. 

\begin{multline}\label{KLD2cellred}
f s\overset{\phi } {\longrightarrow} s^{\prime }Df \overset{s^{\prime
}D\rho } {\longrightarrow} s^{\prime} Ds^{\prime } D^{2} g Dn_A \overset{\mu
D^{2} g Dn_A}{\longrightarrow} s^{\prime} m_{A^{\prime}} D^{2} g Dn_A \\ 
\overset{s^{\prime} m_{g}^{-1} Dn_A}{\longrightarrow}s^{\prime} Dg  \cdot m_A Dn_A \overset{\cong}{\longrightarrow} s^{\prime} Dg  \\
= \\
f s\overset{\rho s}{\longrightarrow } s^{\prime }
Dg \cdot n_A s \overset{s^{\prime} n_g s}{\longrightarrow}s^{\prime} n_{A^{\prime}} g s 
\overset{s^{\prime} n_{A^{\prime}} \psi }{\longrightarrow} s^{\prime} n_{A^{\prime}} s^{\prime} Dg \\
\overset{s^{\prime} n_{s^{\prime}}^{-1} Dg}{\longrightarrow} 
s^{\prime} Ds^{\prime} n_{DA^{\prime}} Dg \overset{\mu n_{DA^{\prime}} Dg}{\longrightarrow
}s^{\prime} m_{A^{\prime}} n_{DA^{\prime}} Dg \overset{\cong}{\longrightarrow
}s^{\prime} Dg  .
\end{multline}
The bijection between reduced and non-reduced forms takes $\rho$ to the $\hat{\rho}$ defined by either side of (\ref{KLD2cellred}).  

The vertical composite of the 2-cells $\rho:(f,\phi) \longrightarrow (g,\psi)$ and $\tau:(g,\psi)\longrightarrow (h,\theta)$ is the 2-cell
\begin{equation}
f^{\prime} fs\overset{f^{\prime} \hat{\rho}}{\longrightarrow }f^{\prime} s^{\prime } Dg \overset{s^{\prime
}\tau }{\longrightarrow }s^{\prime }( s^{\prime }h) \overset{\cong
}{\longrightarrow }\left( s^{\prime }s^{\prime }\right) h\overset{\mu
h}{\longrightarrow }s^{\prime }h\ \ .
\end{equation}

The horizontal composite of 2-cells $\rho : (f, \phi)\longrightarrow (g, \psi) : (A,s) \longrightarrow (A^{\prime},s^{\prime})$ and $\rho^{\prime} : (f^{\prime}, \phi^{\prime})\longrightarrow (g^{\prime}, \psi^{\prime}) : (A^{\prime},s^{\prime}) \longrightarrow (A^{\prime \prime},s^{\prime \prime})$ has non-reduced form the 2-cell 
\begin{multline}
f \overset{\rho}{\longrightarrow} s^{\prime} Dg \cdot n_A \overset{s^{\prime} D\tau \cdot n_A}{\longrightarrow } s^{\prime} Ds^{\prime} D^{2}hDn_A \cdot n_A \overset{\mu D^{2}hDn_A \cdot n_A}{\longrightarrow } s^{\prime} m_{A^{\prime}} D^{2}hDn_A \cdot n_A \\
\overset{s^{\prime} m_{h}^{-1}Dn_A \cdot n_A}{\longrightarrow } s^{\prime} Dh \cdot m_{A} Dn_A \cdot n_A\overset{\cong}{\longrightarrow} s^{\prime} Dh \cdot n_A\ \ .
\end{multline}

This completes the definition of the bicategory $\mathrm{KL}(\bk,D)$ with the exception of giving the coherent associativity and unit isomorphisms. As forewarned, we have been writing as if $\bk$ were a 2-category in which case $\mathrm{KL}(\bk,D)$ would also be a 2-category. Putting in all the coherent isomorphisms as we did in the definition of $\mathrm{KL}(\bk)$, we can readily give them for $\mathrm{KL}(\bk, D)$ as we did for the special case where $D$ was the identity pseudomonad. 

In a future paper we shall explain a universal property of the construction taking $(\bk,D)$ to $\mathrm{KL}(\bk , D)$.

\begin{definition}
An {\it adjunction core} $(u,g,\pi)$ between lax $D$-algebras $(A,s)$ and $(X,t)$ in a bicategory $\bk$
consists of the following data in $\bk$:
\begin{enumerate}
\item morphisms $u:A \longrightarrow X$ and $g:A \longrightarrow X$; \ \ 
\item an invertible 2-cell $\pi : gs \longrightarrow t Du$. 
\end{enumerate}
\end{definition} 
Given such a core, we make the following definitions:

\vspace*{5mm}

(a)  $\bar{\beta} : g \Longrightarrow  tDu \cdot n_A$ is the composite $$g \overset{g \eta} \longrightarrow gsn_A \overset{\pi n_A}\longrightarrow tDu \cdot n_A  \ \ ;$$

(b)  $\bar{\alpha} : u \Longrightarrow gsn_A$ is the composite $$u \overset{\eta u} \longrightarrow t n_X u \overset{\cong} \longrightarrow t Du \cdot n_A \overset{\pi^{-1} n_A} {\longrightarrow} gs n_A \ \ ; $$

(c)  $\upsilon : us \Longrightarrow tDu$ is the composite $$us \overset{\bar{\alpha} s} \longrightarrow gsn_A s \overset{gsn_s}{\longrightarrow} gsDs \cdot n_{DA} \overset{g \mu n_{DA}} \longrightarrow g s m_A n_{DA}\overset{\cong} \longrightarrow gs \overset{\pi} \longrightarrow tDu \ \ ;$$

(d)   $\psi : tDg \Longrightarrow gs$ is the composite $$tDg \overset{t D\bar{\beta}} \longrightarrow tDtD^{2} u Dn_A \overset{\mu n_{u}^{-1}} \longrightarrow tm_X Dn_XDu \overset{\cong} \longrightarrow tDu \overset{\pi^{-1}} \longrightarrow gs \ \ .$$

If $g=f^{\star}$ for some map $f : X \longrightarrow A$ in $\bk$ then $\bar{\alpha}$ and $\bar{\beta}$ have mates $\alpha : fu \Longrightarrow sn_A$ and $\beta^{\prime} : 1_X \Longrightarrow  tDu \cdot n_A f$. We take $\beta : 1_X \Longrightarrow  tDuDf \cdot n_X$ to be the composite of $\beta^{\prime}$ and $t n_f : tDu \cdot n_A f \cong tDuDf \cdot n_X$.   

\begin{theorem} \label{KLDCoreThm}
An adjunction core of the form $(u,f^{\star},\pi)$ between lax $D$-algebras $(A,s)$ and $(X,t)$ in a bicategory $\bk$  extends to an adjunction 
\begin{equation}
(f,\phi) \dashv (u,\upsilon) : (A,s) \longrightarrow (X,t)
\end{equation}
with counit $\alpha$ and unit $\beta$ in $\mathrm{KL}(\bk,D)$ if and only if one of the diagrams (\ref{pi-Dcond1}) or (\ref{pi-Dcond2}) commutes. The adjunction is unique when it exists.
\end{theorem}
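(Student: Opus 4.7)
The plan is to follow the same two-step strategy that was used for Corollary \ref{KLCoreThm}: first to show that an adjunction in $\mathrm{KL}(\bk,D)$ of the given form yields the commutativity of (\ref{pi-Dcond1}) (equivalently (\ref{pi-Dcond2})), and conversely that any adjunction core satisfying this commutativity assembles uniquely into such an adjunction. The invertibility of $\pi$ from an adjunction is to come from a doctrinal analogue of Theorem \ref{ThmAdjKL}: working from $\beta$, $\alpha$, $\phi$ and $\upsilon$ one constructs a 2-cell in the spirit of (\ref{pi}) and a candidate inverse in the spirit of (\ref{pi-inv}), with the multiplication and unit steps now mediated by the pseudonatural transformations $m$, $n$ and the coherence cells $m_h$, $n_h$. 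The triangle identities (\ref{triangle1}), (\ref{triangle2}) in their reduced form at the doctrinal level should drive these calculations just as before.

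For the converse I would mirror Theorem \ref{Core-implies-lots} point by point: assuming (\ref{pi-Dcond1}) (and deriving (\ref{pi-Dcond2}) from it by the same direct diagram-chase as in the monad case), establish in order that (i) $(u,\upsilon)$ is a lax $D$-opmorphism, satisfying (\ref{laxopmor1})--(\ref{laxopmor2}); (ii) $(f^{\star},\psi)$ is a lax $D$-morphism, which by mating across $f\dashv f^{\star}$ produces the lax opmorphism $(f,\phi)$; (iii)--(iv) the factorisations of $\pi$ and $\pi^{-1}$ through $\bar\beta,\bar\alpha,\upsilon,\psi$ hold; and (v)--(viii) the four compatibility identities between these composites hold. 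Taking mates then transports $\bar\alpha$ and $\bar\beta$ into $\alpha : fu\Longrightarrow sn_A$ and $\beta : 1_X\Longrightarrow tDuDf\cdot n_X$; items (v)--(viii) become exactly the 2-cell conditions (\ref{KLD2cell1})--(\ref{KLD2cell2}) for $\alpha$ and $\beta$, while the triangle identities in reduced form (\ref{KLD2cellred}) collapse to the identities in (i)--(ii) restricted to $f$-components. Uniqueness is immediate because the entire adjunction data in $\mathrm{KL}(\bk,D)$ is determined by $u$, $f^{\star}$, $\pi$ together with the chosen $\eta_f$ and $\varepsilon_f$.

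The main obstacle will be the bookkeeping of coherence cells for the pseudomonad $D$. In Theorem \ref{Core-implies-lots} every step of the diagram chases was either associativity, a unit law, or an invocation of one of the opmorphism axioms; in the doctrinal setting each such step now carries additional pseudofunctoriality cells for $D$ and naturality cells for $m$ and $n$, so that even the equivalence (\ref{pi-Dcond1}) $\Leftrightarrow$ (\ref{pi-Dcond2}) will require careful tracking of several instances of $m_h$ and $n_h$. The cleanest way to manage this, as already invoked tacitly at the end of Section \ref{abm}, is to pretend $\bk$ is a 2-category and $D$ a strict 2-monad while carrying out the pasting diagrams, then reinstate the coherence isomorphisms at the end using the coherence theorem for bicategories; string diagrams in the style of \textbf{[}\ref{HCSCSE}\textbf{]}, \textbf{[}\ref{CatStr}\textbf{]} would make the shuffling of these coherence cells visually manageable. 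Once this is accepted, the proof becomes a direct transcription of the arguments of Theorem \ref{Core-implies-lots} and Proposition \ref{AdjimpliesCore}, with every $s^{\prime}f$ replaced by $s^{\prime}Df$ and every associator of $\bk$ augmented by the appropriate pseudomonad coherence.
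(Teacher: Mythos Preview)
The paper does not actually give a proof of Theorem~\ref{KLDCoreThm}; the statement is followed only by the two diagrams (\ref{pi-Dcond1}) and (\ref{pi-Dcond2}) and then the bibliography. So there is nothing in the paper to compare your proposal against. Your sketch is exactly the argument the paper signals one should carry out: transport Theorem~\ref{ThmAdjKL}, Proposition~\ref{AdjimpliesCore}, Theorem~\ref{Core-implies-lots} and Corollary~\ref{KLCoreThm} from $\mathrm{KL}(\bk)$ to $\mathrm{KL}(\bk,D)$ by inserting the pseudomonad coherence cells $m_h$, $n_h$ wherever a whiskering by $s$ or $t$ occurred before, and invoke coherence to justify treating $\bk$ and $D$ as strict during the pasting computations. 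That is the intended proof, and your identification of the bookkeeping of $m$- and $n$-naturality cells as the only new difficulty is accurate.

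One small caution: your claim that ``the triangle identities in reduced form (\ref{KLD2cellred}) collapse to the identities in (i)--(ii) restricted to $f$-components'' is not quite right as stated. The triangle identities for the adjunction are separate conditions, corresponding to (vii) and (viii) of Theorem~\ref{Core-implies-lots} after mating, not to the opmorphism axioms (\ref{laxopmor1})--(\ref{laxopmor2}) for $(f,\phi)$; the latter come from (ii) via the mate $\phi = \widehat{\psi}$. Keep the roles of (i)--(ii) (opmorphism/morphism axioms), (v)--(vi) (2-cell axioms for $\alpha$, $\beta$), and (vii)--(viii) (triangle identities) distinct when you write this out in full.
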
  

\begin{equation}\label{pi-Dcond1}
\xymatrix{
& tDuDs \ar[r]^-{tD\upsilon} & tDtD^2 u \ar[rd]^-{\mu D^2 u} & \\
gsDs \ar[ru]^-{\pi Ds} \ar[rd]_-{g\mu} & & & tm_XD^2 u \\
& gsm_A \ar[r]_-{\pi m_A} & tDu \cdot m_A \ar[ru]_-{t m_u} &}
\end{equation}

\begin{equation}\label{pi-Dcond2}
\xymatrix{
& tDgDs \ar[r]^-{\psi Ds} & gsDs \ar[rd]^-{g \mu} & \\
tDtD^2u \ar[ru]^-{tD\pi^{-1}} \ar[rd]_-{\mu D^2u} & & & gsm_A \\
& tm_XD^2u \ar[r]_-{tm_{u}^{-1}} & tDu \cdot m_A \ar[ru]_-{\pi^{-1}m_A} &}
\end{equation}

\begin{center}
--------------------------------------------------------
\end{center}

\appendix


\begin{thebibliography}{000}

\bibitem{Ben1967} Jean B\'enabou, \textit{Introduction to bicategories}, Lecture Notes in Mathematics \textbf{47} (Springer-Verlag, 1967) 1--77.\label{Ben1967}

\bibitem{BCSW} Renato Betti, Aurelio Carboni, Ross Street and Robert Walters, \textit{Variation through enrichment}, Journal of Pure and Applied Algebra \textbf{29} (1983) 109--127.\label{BCSW}

\bibitem{CKS} Aurelio Carboni, Stefano Kasangian and Ross Street, \textit{Bicategories of spans and relations}, Journal of Pure and Applied Algebra \textbf{33} (1984) 259--267.\label{CKS}

\bibitem{EilKel1966} Samuel Eilenberg and G. Max Kelly, \textit{Closed categories}, Proceedings of the Conference on Categorical Algebra (La Jolla, 1965), (Springer-Verlag,1966) 421--562.\label{EilKel1966}

\bibitem{GPS} Robert Gordon, A. John Power and Ross Street, \textit {Coherence for tricategories},
Memoirs of the American Mathematical Society \textbf{117} no. 558 (1995) vi+81 pp.\label{GPS}

\bibitem{GTC} Andr\'e Joyal and Ross Street, \textit{The geometry of tensor calculus}, Advances in Mathematics \textbf{88} (1991) 55--112.\label{GTC}

\bibitem{Kan1958} Daniel M. Kan, \textit{Adjoint functors}, Transactions of the American Mathematical Society \textbf{87} (1958) 294--329. \label{Kan1958}

\bibitem{Kelly1969} G. Max Kelly, \textit{Adjunction for enriched categories}, Lecture Notes in Mathematics \textbf{106} (Springer-Verlag, 1969) 166--177.\label{Kelly1969}

\bibitem{Kelly1974} G. Max Kelly, \textit{Doctrinal adjunction}, Lecture Notes in Mathematics \textbf{420} (Springer-Verlag, 1974) 257--280.\label{Kelly1974}

\bibitem{KellyBook} G. Max Kelly, \textit{Basic concepts of enriched category theory}, London Mathematical Society Lecture Note Series \textbf{64} (Cambridge University Press, Cambridge, 1982). \label{KellyBook}

\bibitem{KelSt1974} G. Max Kelly and Ross Street, \textit{Review of the elements of 2-categories}, Lecture Notes in Mathematics \textbf{420} (Springer-Verlag, 1974) 75--103.\label{KelSt1974}

\bibitem{Lack2000} Stephen Lack, \textit{A coherent approach to pseudomonads}, Advances in Mathematics \textbf{152} (2000) 179--202.\label{Lack2000} 

\bibitem{FTMII} Stephen Lack and Ross Street, \textit{The formal theory of monads II}, Journal of Pure and Applied Algebra \textbf{175} (2002) 243--265.\label{FTMII} 

\bibitem{Law1969} F. William Lawvere, \textit{Ordinal sums and equational doctrines},  Lecture Notes in Mathematics \textbf{80} (Springer-Verlag, 1969) 141--155.\label{Law1969}

\bibitem{MacLPar} Saunders Mac Lane and Robert Par\'e, \textit{Coherence for bicategories and indexed categories}, Journal of Pure and Applied Algebra \textbf{37} (1985) 59--80.\label{MacLPar}

\bibitem{Marm} Francisco Marmolejo, \textit{Distributive laws for pseudomonads}, Theory and Applications of Categories \textbf{5} (1999) 91--147.\label{Marm}

\bibitem{FTM} Ross Street, \textit{The formal theory of monads}, Journal of Pure and Applied Algebra \textbf{2} (1972) 149--168.\label{FTM} 

\bibitem{FiB} Ross Street, \textit{Fibrations in bicategories}, Cahiers de topologie et g\'eom\'etrie diff\'erentielle \textbf{21} (1980) 111--160.\label{FiB}

\bibitem{HCSCSE} Ross Street, \textit{Higher categories, strings, cubes and simplex equations}, Applied Categorical Structures \textbf{3} (1995) 29--77.\label{HCSCSE}

\bibitem{CatStr} Ross Street, \textit{Categorical structures}, Handbook of Algebra \textbf{1} (edited by M. Hazewinkel, Elsevier Science, 1996) 529--577.\label{CatStr}

\bibitem{WeberACS} Mark Weber, \textit{Extended 2-cells of lax algebras}, Talk in the Australian Category Seminar (Macquarie University, 26 October 2011).\label{WeberACS}

\bibitem{Zoeb} Volker Z\"oberlein, \textit{Doctrines on 2-categories}, Mathematische Zeitschrift \textbf{148} (1976) 267--279. \label{Zoeb}

\end{thebibliography}
\end{document}